\newtheorem{theorem}{Theorem}[section]
\newtheorem{lemma}[theorem]{Lemma}
\newtheorem{proposition}[theorem]{Proposition}
\newtheorem{corollary}[theorem]{Corollary}
\theoremstyle{definition}
\newtheorem{definition}[theorem]{Definition}
\newenvironment{revvy}{\color{magenta}}{}
\newenvironment{revdw}{\color{blue}}{}
\newenvironment{revsg}{\color{red}}{}
\def\by{\begin{revvy}}
\def\ey{\end{revvy}}
\def\bd{\begin{revdw}}
\def\ed{\end{revdw}}
\def\bg{\begin{revsg}}
\def\eg{\end{revsg}}
\def\Z{\mathbb{Z}}
\def\Z{\mathbb{Z}}
\def\a{m}
\def\bsft{\mathfrak{b}}
\def\e#1{e}
\begin{document}
\title{Borel combinatorics of Schreier graphs of $\Z$-actions}

\author{Su Gao}
\address{School of Mathematical Sciences and LPMC, Nankai University, Tianjin 300071, P.R. China}
\email{sgao@nankai.edu.cn}

\author{Yingying Jiang}
\address{School of Mathematical Sciences and LPMC, Nankai University, Tianjin 300071, P.R. China}
\email{yyjiangmath@mail.nankai.edu.cn}

\author{Tianhao Wang}
\address{School of Mathematical Sciences and LPMC, Nankai University, Tianjin 300071, P.R. China}
\email{tianhao\_wang@qq.com}

\begin{abstract} In this paper we consider the Borel combinatorics of Schreier graphs of $\Z$-actions with arbitrary finite generating sets. We formulate the Borel combinatorics in terms of existence of Borel equivariant maps from $F(2^\Z)$ to subshifts of finite type. We then show that the Borel combinatorics and the continuous combinatorics coincide, and both are decidable. This is in contrast with the case of $\Z^2$-actions. We then turn to the problem of computing Borel chromatic numbers for such graphs. We give an algorithm for this problem which runs in exponential time. We then prove some bounds for the Borel chromatic numbers and give a formula for the case where the generating set has size $4$.
\end{abstract}



\maketitle

\section{Introduction}
Borel combinatorics studies combinatorial properties of topological graphs or more generally Borel graphs. Since the seminal paper of Kechris, Solecki and Todorcevic \cite{KST} on Borel chromatic numbers, many researchers have contributed to this new and rapidly growing area of research; a comprehensive survey can be found in the recent book by Kechris and Marks \cite{KM}.

One of the reasons Borel combinatorics is interesting to researchers is that one obtains different answers to combinatorial questions when imposing definability requirements on the combinatorial objects. For instance, Laczkovich realized early on that the graph on the unit circle defined by a rotation by a fixed irrational angle has chromatic number $2$ (since every connected component is isomorphic to a bi-infinite path) but its Borel chromatic number is necessarily $3$, that is, any set of colors one needs to use to find a Borel proper coloring function from this graph must have at least $3$ elements. 

More recently, it was discovered by Gao, Jackson, Krohne and Seward \cite{GJKS, GJKS2} that there is a further difference between Borel combinatorics and continuous combinatorics for topological graphs for nontrivial reasons. For instance, the above graph considered by Laszkovich does not have a sensible continuous chromatic number because the space is connected. But if one considers the space with one connected component removed, then it becomes zero-dimensional and the continuous chromatic number makes sense and can be seen to be $3$ (while the Borel chromatic number remains $3$). Answering a question of \cite{KST}, the authors of \cite{GJKS2} considered the free part of the $\Z^2$-action on the shift space $2^{\Z^2}$, and showed that the Borel chromatic number is $3$; in contrast, they also showed  (see \cite{GJKS}) that the continuous chromatic number is $4$. A similar difference exists for edge chromatic numbers of this graph (see \cite{BHT, CU, GJKS, GWW, GR, Weil}).

The computations of chromatic numbers and edge chromatic numbers belong to a class of questions known as {\em locally checkable labelling} (\emph{LCL}) problems (see, e.g., \cite{GR, GR2}), a concept that had been studied in distributed computing in computer science (see, e.g. \cite{BHKetc}). Bernshteyn's influential work \cite{Ber} connected the distributed computing and Borel combinatorics together. There are now many results that characterize the descriptive-set-theoretic complexity of LCL problems by the computational complexity of local algorithms that solve the problem in a distributed fashion. 

In this paper we study the Borel chromatic numbers of Schreier graphs that come from $\Z$-actions but with arbitrary finite sets of generators. More generally, we formulate the LCL problems in terms of the existence of Borel equivariant maps from $F(2^\Z)$, the free part of the $\Z$-action on the shift space $2^\Z$, to certain subshifts of finite type. Then we show that the Borel combinatorics in this context is the same as the continuous combinatorics, via the following main theorem.

\begin{theorem}\label{thm:1.1} Let $X$ be a $\Z$-subshift of finite type. Then the following are equivalent:
\begin{enumerate}
\item[\rm (1)] There exists a Borel equivariant map from $F(2^\Z)$ to $X$;
\item[\rm (2)] There exists a continuous equivariant map from $F(2^\Z)$ to $X$.
\end{enumerate}
\end{theorem}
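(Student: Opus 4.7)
The direction (2) $\Rightarrow$ (1) is immediate since every continuous map is Borel. The content is in the reverse direction, which I would prove by extracting finite combinatorial data from the hypothetical Borel map and then constructing a continuous map using the clopen marker structure on $F(2^\Z)$---a feature of $\Z$-actions that has no direct analogue for $\Z^d$ with $d \ge 2$.

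Fix a presentation of $X \subseteq A^\Z$ by forbidden patterns of some bounded length $k$, and let $G_X$ denote the associated transition graph whose vertices are admissible words in $A^{k-1}$ and whose edges correspond to admissible extensions; points of $X$ are exactly bi-infinite walks in $G_X$. Given a Borel equivariant $\phi: F(2^\Z) \to X$, I would first extract a fixed admissible vertex $u \in G_X$ and a nonempty clopen $U \subseteq F(2^\Z)$ such that $\phi(x)_{[0,k-1)} = u$ holds on a comeager-in-$U$ subset, by a Baire category / pigeonhole argument on the finitely many possible values of $\phi(x)_{[0,k-1)}$. Equivariance then forces $u$ to reappear along $\phi(x)$ at infinitely many positions for such $x$, and a standard gcd argument on the return times yields an integer $p \ge 1$ and a constant $L_0$ such that admissible walks in $G_X$ from $u$ to $u$ exist for every length $\ell \ge L_0$ with $\ell \equiv 0 \pmod p$.

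I would then invoke the clopen marker lemma for the free $\Z$-action on $2^\Z$: for all sufficiently large $N$, there is a clopen set $M_N \subseteq F(2^\Z)$ whose consecutive hits along each orbit are separated by distances in $[N, 2N]$, and by nesting and refinement these distances can be controlled modulo $p$. A continuous equivariant map $\psi$ is then defined as follows: at each marker position write the fixed anchor word $u$, and in each inter-marker interval of length $\ell$ (arranged so $\ell \equiv 0 \pmod p$ and $\ell \ge L_0$) fill the values of $\psi$ with the lexicographically least admissible walk in $G_X$ from $u$ to $u$ of length $\ell$. Since the marker sets are clopen and the values of $\psi$ on each interval are determined by finite clopen data, $\psi$ is continuous, it is equivariant by construction, and its orbit traces lie in $X$ because the anchor $u$ glues adjacent walks admissibly.

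The main obstacle is the extraction step: showing that the mere Borel existence of $\phi$ suffices to produce a fixed anchor word $u$ with a well-controlled return-length set in $G_X$. This is where one uses the rigidity of Borel equivariance together with Baire-categorical arguments, and it is essentially what makes the $\Z$ case tractable in contrast to $\Z^d$ for $d \ge 2$, where the higher-dimensional marker structure is too flexible to promote Borel witnesses to continuous ones. Once this combinatorial data is in hand, the marker-based continuous construction is mechanical.
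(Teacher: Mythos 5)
There is a genuine gap at the point where you pass from the extracted return data to the marker construction. Your extraction only yields an integer $p\ge 1$ such that admissible $u$-to-$u$ walks exist in all sufficiently large lengths divisible by $p$, and you then assert that the clopen marker distances ``can be controlled modulo $p$'' by nesting and refinement. For $p\ge 2$ this is false, and it is exactly the obstruction the theorem is about: a clopen (or even Borel) marker set on $F(2^\Z)$ all of whose gap lengths are multiples of $p$ would give a Borel equivariant map from $F(2^\Z)$ onto $\Z/p\Z$ with the translation action (record the position modulo $p$ inside each tile), and for $p=2$ this is a Borel proper $2$-coloring of the Schreier graph generated by $\pm 1$, contradicting the fact that its Borel chromatic number is $3$ (equivalently, it would produce a Borel eigenfunction for the Bernoulli shift, which is weakly mixing). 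So your construction goes through only if $p=1$, and your sketch gives no argument for that. Note that $p=1$ can genuinely fail for a subshift of finite type --- e.g.\ the two-point subshift forbidding $00$ and $11$ has all return lengths even --- and correspondingly admits no Borel equivariant map from $F(2^\Z)$; so $p=1$ must be \emph{derived} from the existence of the Borel map, not assumed.

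This missing derivation is precisely the heart of the paper's proof. There, after restricting the Borel map $f$ to a comeager set $D$ on which it is continuous and shift-invariantly so, one finds a basic clopen $V$ (determined by coordinates in some $[a,b]$) mapped into the cylinder of the anchor word, and then uses the product structure of $2^\Z$: for \emph{any} coprime $p,q>\max\{b-a,n\}$ the set $(-p\cdot V)\cap V\cap(q\cdot V)$ is a nonempty open set, hence meets $D$, producing a single point $y=f(x)$ in $X$ in which the anchor word occurs at three positions with gaps $p$ and $q$. This yields $u$-to-$u$ walks of two coprime lengths, so the return-length semigroup has gcd $1$ and your $p$ equals $1$; the paper packages this as verifying the hypothesis of the Two-Tiles Theorem of Gao--Jackson--Krohne--Seward rather than rebuilding the continuous map by hand, but the marker construction you describe is essentially the content of that theorem's sufficiency direction. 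If you add the coprime-recurrence step (and take $N\ge L_0$ in the marker lemma), your argument becomes a correct, self-contained alternative to citing the Two-Tiles Theorem; without it, the proof fails exactly where the distinction between $\Z$ and $\Z^2$ actions lives.
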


By the results of \cite{GJKS, GJKS2}, we know that the theorem fails when $\Z$ is replaced by $\Z^2$. In fact, using some results of \cite{GJKS} we give a more concrete characterization of the above problems, and show that the problem is actually decidable, that is, there is an algorithm which, given a subshift of finite type, computes whether there exists a Borel equivariant map from $F(2^\Z)$ to $X$.

We then apply the main theorem to the problem of computing Borel chromatic numbers. If $S$ is a finite generating set of $\Z$ that is symmetric (i.e., $S=-S$) and $0\not\in S$, we obtain a Schreier graph $G_S$ on $F(2^\Z)$ by putting an edge between $x$ and $y$ if and only if there is $s\in S$ such that $s\cdot x=y$. As mentioned above, this problem is now computable, that is, given $S$, the Borel chromatic number of $G_S$ is a computable function with $S$ as input. However, the naive algorithm that computes the Borel chromatic number takes exponential time in $\max S$. Thus it is still not feasible for us to compute the full answer of this problem. 

We then prove some partial results on this problem. Specifically, we prove some results on the upper and lower bounds, and then obtain the exact answers in some special cases. The following are our main results.

\begin{theorem}\label{thm:1.2} Let $S=\{\pm a_1,\dots, \pm a_n\}$ with $\gcd(a_1,\dots, a_n)=1$. Let $\chi(S)$ be the Borel chromatic number of $G_S$. Then the following hold.
\begin{enumerate}
\item[\rm (1)] $3\leq \chi(S)\leq \left\lfloor\displaystyle\frac{3}{2}n\right\rfloor +1$.
\item[\rm (2)] If $S\cap (k+1)\Z=\varnothing$ for some $k\geq 1$, then $\chi(S)\leq k+2$.
\item[\rm (3)] If $(a_1,\dots, a_n)=(1,\dots, n)$, then $\chi(S)=n+2$.
\item[\rm (4)] If $n=2$ and $(a_1, a_2)\neq (1,2)$, then $\chi(S)=3$.
\end{enumerate}
\end{theorem}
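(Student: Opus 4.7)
The plan is to reduce every statement to a continuous proper coloring of $F(2^\Z)$ via Theorem \ref{thm:1.1}, and then build these from two ingredients: an explicit periodic proper coloring of $\mathrm{Cay}(\Z, S)$ on the one hand, and a marker-lemma decomposition of the orbits of $F(2^\Z)$ into blocks of prescribed lengths on the other. Lower bounds will come either from classical or measure-theoretic obstructions on $F(2^\Z)$ or from the decidable characterization of continuous chromatic numbers announced immediately after Theorem \ref{thm:1.1}.

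I would begin with part (2). If $S \cap (k+1)\Z = \varnothing$, then $i \mapsto i \bmod (k+1)$ is a proper $(k+1)$-coloring of $\mathrm{Cay}(\Z, S)$. To lift this to a continuous coloring of $F(2^\Z)$, I invoke the marker lemma to cut each orbit into blocks of large bounded length, color the interior of each block by $i \bmod (k+1)$ starting from its first point, and use the extra color $k+1$ only on the $O(\max S)$ positions near each boundary whose colors would otherwise clash across the boundary. This gives $\chi(S) \leq k+2$. Part (3)'s upper bound is then the special case $k = n$, since $a_i \leq n$ forces $S \cap (n+1)\Z = \varnothing$. For (4)'s upper bound I split into cases: if both $a_1, a_2$ are odd, apply (2) with $k = 1$; if exactly one of them is even (necessarily $(a_1, a_2) \neq (1,2)$), I would construct by hand a short periodic proper $3$-coloring of $\mathrm{Cay}(\Z, S)$ (for instance $0,0,1,1,2,2$ of period $6$ handles $(a_1, a_2) = (2,3)$) and lift by a marker construction with block lengths divisible by the chosen period. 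For (1)'s upper bound I would exhibit an explicit periodic proper $(\lfloor 3n/2 \rfloor + 1)$-coloring of $\mathrm{Cay}(\Z, S)$ via a combinatorial construction on distance graphs and lift in the same way.

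For the lower bounds, (1)'s lower bound $\chi(S) \geq 3$ splits in two. If $\mathrm{Cay}(\Z, S)$ is non-bipartite, its classical chromatic number is already $\geq 3$. Otherwise all $a_i$ are odd (the only index-$2$ subgroup of $\Z$ being $2\Z$), and any Borel $2$-coloring of $F(2^\Z)$ gives a Borel set $A$ with $\sigma^{a_i}(A) = F(2^\Z) \setminus A$ for every $i$, hence $\sigma^{a_i - a_j}(A) = A$; letting $d = \gcd_{i,j}(a_i - a_j)$, which for $n \geq 2$ is a nonzero even integer, the ergodicity of $\sigma^d$ with respect to the Bernoulli measure forces $\mu(A) \in \{0, 1\}$, contradicting $\mu(A) = 1/2$. (The case $n = 1$ is the classical Kechris--Solecki--Todorcevic lower bound.) Part (4)'s lower bound is immediate from (1). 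The hardest lower bound is the one in (3). For $S = \{\pm 1, \dots, \pm n\}$ every proper $(n+1)$-coloring of $\mathrm{Cay}(\Z, S)$ is rigidly periodic of period exactly $n+1$, because any $n+1$ consecutive integers must receive all $n+1$ colors. I would feed this rigidity into the decidable characterization following Theorem \ref{thm:1.1} to conclude that no continuous $(n+1)$-coloring of $F(2^\Z)$ exists, giving $\chi(S) \geq n+2$.

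The principal obstacle I expect is precisely this last step: turning the combinatorial rigidity of $(n+1)$-colorings of $\Z$ into the dynamical non-existence of a continuous $(n+1)$-coloring of $F(2^\Z)$ requires invoking the decidable criterion in just the right way and ruling out exotic constructions that pass through different ``phases'' across different orbits. A secondary technical difficulty is arranging the marker lemma so that block lengths satisfy the divisibility conditions required by each upper-bound construction; this may force either a refinement of the marker argument or an additional boundary-adjustment step.
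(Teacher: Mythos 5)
Your lower bounds are essentially sound. The ergodicity argument for $\chi(S)\geq 3$ is a valid measure-theoretic alternative to the paper's combinatorial proof (Theorem~\ref{thm:lower}, which proves the stronger bound $\chi(S)\geq\kappa_S+1$ via the two-tiles criterion), and your rigidity observation for $S=\{\pm1,\dots,\pm n\}$ is exactly the right idea: once every proper $(n+1)$-coloring of the Cayley graph is forced to be periodic with period $n+1$, Lemma~\ref{lem:basic} with $\gcd(p,q)=1$ gives an immediate contradiction. The step you flag as the principal obstacle is in fact the one that goes through smoothly.

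The genuine gaps are in the upper bounds, and they are not technicalities --- they are where all the work in Sections~\ref{sec:5} and~\ref{sec:6} lives. First, the boundary repair in (1) and (2) fails as stated: the set of clashing positions near a block boundary typically contains pairs differing by an element of $S$, so painting them all with the single extra color $k+1$ creates new monochromatic edges. Concretely, for $S=\{\pm1,\pm2,\pm4\}$ and $k+1=3$, when the block length is $\equiv 1\pmod 3$ the clashes come from $s\in\{1,4\}$, and one checks that no choice of one endpoint from each clashing pair is $S$-independent. An extra color can only occupy an $S$-independent set; arranging this is the entire content of the proof of Theorem~\ref{thm:cong}, where the color $m+1$ is inserted \emph{periodically} with period $m+1$ (hence automatically independent, since $S\cap(m+1)\Z=\varnothing$) and the phase shift is then absorbed by a cascade of $m+2$ single-color replacement steps spread over widely separated blocks. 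Second, and more fundamentally, your plan for (4) --- markers ``with block lengths divisible by the chosen period'' --- is impossible: a Borel decomposition of every orbit into blocks of length divisible by a fixed $d\geq 2$ yields a Borel set $A$ with $\sigma^d(A)=A$ and $\mu(A)=1/d$, contradicting ergodicity of $\sigma^d$; equivalently, Lemma~\ref{lem:basic} forces the two tile lengths to be coprime. The real difficulty, which your proposal assumes away, is to build by hand a proper coloring of the \emph{second} tile that transitions between two incompatible phases of the periodic coloring; that is what the word manipulations in the proof of Theorem~\ref{thm:pair} accomplish. Finally, for (1) note that the Cayley graph always has a greedy proper $(n+1)$-coloring, so the point of $\lfloor 3n/2\rfloor+1$ is not the chromatic number of a distance graph but the $\lfloor n/2\rfloor$ extra colors needed for the phase repair, which again requires the careful recoloring argument of the paper rather than a buffer.
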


We are not able to give a general formula for $\chi(S)$ when $n\geq 3$, but our method can determine the value of $\chi(S)$ for many different types of $S$. In particular, we show that for any $n\geq 1$ and $3\leq k\leq n+2$, there is a generating set $S$ as above where $|S|=2n$ and $\chi(S)=k$. However, we do not have any example of an $S$ where $|S|=2n$ but $\chi(S)>n+2$.

The rest of the paper is organized as follows. In Section~\ref{sec:2} we recall some preliminary concepts and results in descriptive set theory, topological dynamics, and graph theory. In particular, we explain how to formulate Borel combinatorics by the existence of Borel equivariant maps into subshifts of finite type. In Section~\ref{sec:3} we prove Theorem~\ref{thm:1.1} and give a finitary characterization which is equivalent to both clauses of Theorem~\ref{thm:1.1}. This finitary characterization turns out to be decidable. Starting from Section~\ref{sec:4} we turn to the problem of finding Borel chromatic numbers of $G_S$. In Section~\ref{sec:4} we give an algorithm for the problem. However, the algorithm runs in exponential time in $\max S$. In Section 5 we prove Theorem~\ref{thm:1.2} (1)--(3) and give some conseqences. In Section 6 we prove Theorem~\ref{thm:1.2} (4). Finally, in Section 7 we make some remarks about the case $|S|=6$.

\section{Preliminaries}\label{sec:2}

In this section we recall some preliminary concepts and results in descriptive set theory, topological dynamics, and graph theory. 

A topological space $X$ is {\em Polish} if it is separable and completely metrizable. If $X$ is a Polish space and $Y\subseteq X$, then the subspace topology on $Y$ is Polish if and only if $Y$ is a $G_\delta$ subset in $X$, i.e., $Y$ is the intersection of countably many open subsets of $X$. The collection of all {\em Borel sets} in a Polish space $X$ is the smallest $\sigma$-algebra generated by the collection of all open subsets of $X$. If $X, Y$ are Polish spaces, then $f\colon X\to Y$ is {\em continuous} if for any open subset $U\subseteq Y$, $f^{-1}(U)$ is an open subset of $X$, and $f$ is {\em Borel-measurable}, or simply {\em Borel}, if for any open subset $U\subseteq Y$, $f^{-1}(U)$ is a Borel subset of $X$.

Let $X$ be a Polish space and $A\subseteq X$. We say that $A$ is {\em comeager} if $A$ contains a dense $G_\delta$ subset of $X$. By definition, a comeager set is dense. By the Baire category theorem, in a Polish space the intersection of countably many comeager sets is still comeager. If $Y$ is another Polish space and $f\colon X\to Y$ is Borel, then there is a comeager $C\subseteq X$ such that $f\upharpoonright C\colon C\to Y$ is continuous.

We consider the shift space $\bsft^\Z=\{0,\dots,\bsft-1\}^\Z$ for some $\bsft\geq 1$. For $s\in \Z$ and $y\in \bsft$, the $\Z$-action is defined by the shift map
$$ (s\cdot y)(n)=y(n+s) $$
for any $n\in\Z$.  We equip $\bsft$ with the discrete topology, and $\bsft^\Z$ with the product topology. With this topology, $\bsft^\Z$ is a compact Polish space and the shift map is continuous. For any $y\in \bsft^Z$, the {\em orbit} of $y$ is 
$$ [y]=\left\{s\cdot y\colon s\in \Z\right\}. $$
A subset $Y$ of $\bsft^\Z$ is {\em shift-invariant} if for any $y\in Y$, we have $[y]\subseteq Y$. A closed shift-invariant subset of $\bsft^\Z$ is called a {\em subshift} of $\bsft^\Z$. 

In this context, a {\em pattern} $p$ is a nonempty finite word in the alphabet $\mathfrak{b}$. We regard a pattern $p$ as a function from $\{0, \dots, \ell(p)-1\}$ to $\{0,\dots, \bsft-1\}$, where $\ell(p)\geq 1$ is the {\em length} of $p$. If $y\in \bsft^\Z$ and $p$ is a pattern, then we say that $p$ {\em occurs} in $y$ if there is $n\in\Z$ such that for any $0\leq i<\ell(p)$, $p(i)=y(n+i)$; otherwise $p$ {\em does not occur} in $y$. 

Given patterns $p_1, \dots, p_k$, define a subshift 
$$ Y=Y_{(\bsft; p_1,\dots,p_k)}=\left\{y\in \bsft^\Z\colon \mbox{none of $p_1,\dots, p_k$ occur in $y$}\right\}. $$
$Y$ is a closed shift-invariant subset of $\bsft^\Z$, hence a subshift. Here $Y$ is called the {\em subshift of finite type} described by $(\bsft; p_1,\dots, p_k)$, and the patterns $p_1,\dots, p_k$ in the description of $Y$ are called the {\em forbidden patterns}. Any subshift of finite type can be equivalently described by forbidden patterns of the same length. Thus, in our discussions, we tacitly assume that the forbidden patterns have the same length.

The {\em free part} of $\bsft^\Z$ is
$$ F(\bsft^\Z)=\left\{y\in \bsft^\Z\colon \mbox{for any nonzero $s\in \Z$, $s\cdot y\neq y$}\right\}. $$
$F(\bsft^\Z)$ is a shift-invariant subset of $\bsft^\Z$. However, it is not closed, but a $G_\delta$ subset, hence it is a Polish space (and it is no longer compact). In this paper, we only consider $F(2^\Z)$, the free part of $2^\Z$.

A map $f\colon F(2^\Z)\to Y$, where $Y$ is a subshift of $\bsft^\Z$, is {\em equivariant} if for any $s\in \Z$ and $x\in F(2^\Z)$, we have $f(s\cdot x)=s\cdot f(x)$. 

Let $S$ be a finite generating set for $\Z$, i.e., $\langle S\rangle=\Z$. We tacitly assume that $0\not\in S$ and $S$ is {\em symmetric}, i.e., $S=-S$. The {\em Cayley graph} $C_S$ is the graph $(\Z, E_S)$, where $uv\in E_S$ exactly when $u-v\in S$. The {\em Schreier graph} $G_S$ is the graph $(2^\Z, W_S)$, where $xy\in W_S$ exactly when there is $s\in S$ such that $s\cdot x=y$. No matter what the generating set $S$ is, the connected components of $G_S$ are exactly the orbits of $2^\Z$. Thus it makes sense to speak of the Schreier graph on a shift-invariant subset of $2^\Z$, such as $F(2^\Z)$. Note that because of the freeness, each connected component of $F(2^\Z)$ is isomorphic to the Cayley graph $C_S$. 

In general, if $K$ is a set and $G=(V,E)$ is a graph, a {\em proper $K$-coloring} of $G$ is a function $c\colon V\to K$ for some some set $K$ such that whenever $uv\in E$, we have $c(u)\neq c(v)$. The {\em chromatic number} of $G$ is the least cardinality of $K$ such that there exists a proper $K$-coloring of $G$. When $G$ is a topological graph, as in the case of $G_S$ defined above, we may equip the set $K$ with the discrete topology and consider proper $K$-coloring functions from $G$ to $K$ that are continuous or Borel, and then define the corresponding notions of {\em continuous chromatic number} or {\em Borel chromatic number} of $G$. For all of the graphs we consider in this paper, all of the different versions of the chromatic numbers are finite.

Now, LCL problems on $G_S$ can be reformulated in terms of equivariant maps from $F(2^\Z)$ to some suitably defined subshifts of finite type. Here we will not define LCL problems in general, but give only an example of how the existence of a proper $\bsft$-coloring of $G_S$ corresponds to an equivariant map from $F(2^\Z)$ to a subshift of $\bsft^\Z$ of finite type. Let $\ell=\max S+1$. We define a finite set $F$ of forbidden patterns of length $\ell$ in the alphabet $\bsft$  by
$$ p\in F\iff \exists 0\leq i<\ell\ \exists s\in S\ (0\leq i+s<\ell \mbox{ and } p(i)=p(i+s)). $$
Enumerate $F$ as $p_1,\dots, p_k$. Then it is straightforward to check by definition that the following are equivalent:
\begin{enumerate}
\item[(1)] There exists an equivariant map $f\colon F(2^\Z)\to Y_{(\bsft; p_1, \dots, p_k)}$;
\item[(2)] There exists a proper $\bsft$-coloring of $G_S$.
\end{enumerate}
Additionally, if we enhance both (1) and (2) by requiring the maps (or the coloring functions) to be continuous or Borel, the enhanced versions are still equivalent. 

Thus, in this paper, by the {\em continuous} (or {\em Borel}) {\em combinatorics} of $F(2^\Z)$, we mean the problem of the existence of a continuous (or Borel, respectively) equivariant map from $F(2^\Z)$ to an arbitrary subshift of finite type.

\section{Borel combinatorics versus continuous combinatorics}\label{sec:3}

A characterization of the continuous combinatorics of $F(2^\Z)$ has already been given in \cite[Section 2.2]{GJKS}, where the main theorem \cite[Theorem 2.2.2]{GJKS} has been dubbed the Two-Tiles Theorem (we will recall the details below). Here we expand this characterization and show that the Borel combinatorics and the continuous combinatorics coincide. 

To state our main result we need to first recall some definitions from \cite[Section 2.2]{GJKS}. 

We first define an infinite family of finite directed graphs 
$$\left\{\Gamma_{n,p,q}\colon 1\leq n\leq p, q\right\},$$ 
which we call {\em two-tiles graphs}. For any integers $a, b$ with $1\leq a\leq b$, let $T(a,b)$ be a simple directed path of length $a+b-1$. Note that $T(a, b)$ consists of $a+b$ many vertices and $a+b-1$ many edges. When enumerating the vertices of $T(a,b)$, we always start with the unique vertex with $0$ indegree and follow the directions of the edges. With this convention we may speak of the first $n$ vertices, the last $n$ vertices, etc. Now, to define $\Gamma_{n,p,q}$ we fix $1\leq n\leq p, q$. Let $T_1(n, p, q)$ be a copy of $T(n,p)$ and let $T_2(n,p,q)$ be a copy of $T(n,q)$. Then let $\Gamma_{n,p,q}$ be the directed graph obtained from first taking the disjoint union of $T_1(n,p,q)$ and $T_2(n,p,q)$ and then identifying the following four subgraphs of $T_1(n,p, q)$ and $T_2(n,p,q)$:
\begin{itemize}
\item the subgraph of $T_1(n,p,q)$ induced by its first $n$ vertices,
\item the subgraph of $T_1(n,p,q)$ induced by its last $n$ vertices,
\item the subgraph of $T_2(n,p,q)$ induced by its first $n$ vertices, and
\item the subgraph of $T_2(n,p,q)$ induced by its last $n$ vertices.
\end{itemize}
Note that each of these subgraphs is a simple directed path of length $n-1$, and there is a unique isomorphism between any two of these subgraphs. Hence there is a unique way to implement the identification step of the construction. Figure~\ref{fig:onedimgamma} illustrates the graphs $T_1(3,7,9)$, $T_2(3,7,9)$ and $\Gamma_{3,7,9}$.

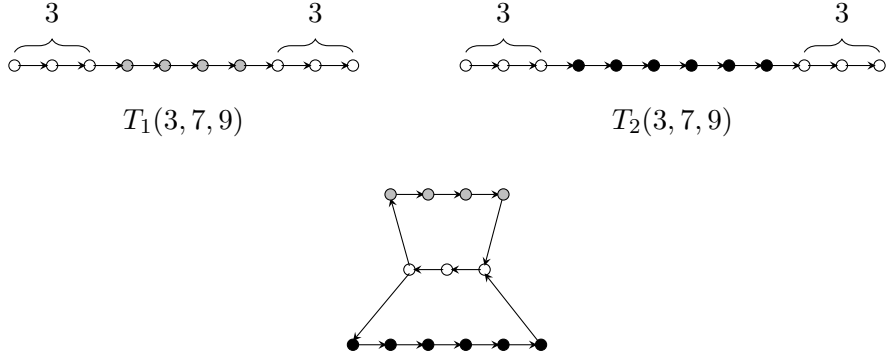
\begin{figure}[h]
\centering
\subfloat
{
\begin{tikzpicture}[scale=0.5]
\pgfmathsetmacro{\n}{3}
\pgfmathsetmacro{\p}{7}
\pgfmathsetmacro{\q}{9}
\pgfmathsetmacro{\a}{-7}
\pgfmathsetmacro{\b}{5}
\pgfmathsetmacro{\d}{4.5}
\pgfmathsetmacro{\e}{5.5}
\pgfmathsetmacro{\x}{\n+\p+\n-1}
\pgfmathsetmacro{\y}{\n+\q+\n-1}
\pgfmathsetmacro{\xa}{\n+\p-2}
\pgfmathsetmacro{\ya}{\n+\q-2}
\pgfmathsetmacro{\nm}{\n-1}
\pgfmathsetmacro{\pm}{\p-\n-1}
\pgfmathsetmacro{\qm}{\q-\n-1}
\foreach \i in {0,...,\nm}
{ \draw (\a+\i,0) circle (0.15);
}
\foreach \i in {0,...,\pm}
{ \draw[fill=lightgray] (\a+\n+\i,0) circle (0.15);
}
\foreach \i in {0,...,\nm}
{ \draw  (\a+\p+\i,0) circle (0.15);
}
\foreach \i in {0,...,\xa}
{ \draw[->,>=stealth] (\a+\i+0.1,0) -- (\a+\i+0.9,0);
}

\foreach \i in {0,...,\nm}
{ \draw  (\b+\i,0) circle (0.15);
}
\foreach \i in {0,...,\qm}
{ \draw[fill=black] (\b+\n+\i,0) circle (0.15);
}
\foreach \i in {0,...,\nm}
{ \draw  (\b+\q+\i,0) circle (0.15);
}
\foreach \i in {0,...,\ya}
{ \draw[->,>=stealth] (\b+\i+0.1,0) -- (\b+\i+0.9,0);
}

\draw[decorate, decoration={brace, mirror, amplitude=5pt},yshift=10pt] (\a+\n-1,0) -- (\a,0)
node [black,midway,yshift=15pt] {$3$};

\node at (\a+\d, -1.5) {$T_1(3,7,9)$};

\draw[decorate, decoration={brace, mirror, amplitude=5pt},yshift=10pt] (\a+\p+\n-1,0) -- (\a+\p,0)
node [black,midway,yshift=15pt] {$3$};

\draw[decorate, decoration={brace, mirror, amplitude=5pt},yshift=10pt] (\b+\n-1,0) -- (\b,0)
node [black,midway,yshift=15pt] {$3$};

\node at (\b+\e,-1.5) {$T_2(3,7,9)$};

\draw[decorate, decoration={brace, mirror, amplitude=5pt},yshift=10pt] (\b+\q+\n-1,0) -- (\b+\q,0)
node [black,midway,yshift=15pt] {$3$};

\end{tikzpicture}
}
\vspace{5pt}
\subfloat
{
\begin{tikzpicture}[scale=0.5]
\pgfmathsetmacro{\n}{3}
\pgfmathsetmacro{\p}{7}
\pgfmathsetmacro{\q}{9}
\pgfmathsetmacro{\a}{-10}
\pgfmathsetmacro{\b}{5}

\pgfmathsetmacro{\c}{-0.5}
\pgfmathsetmacro{\d}{2}
\pgfmathsetmacro{\e}{\p-\n-1}
\pgfmathsetmacro{\em}{\e-1}
\pgfmathsetmacro{\h}{0.1}

\pgfmathsetmacro{\f}{\q-\n-1}
\pgfmathsetmacro{\fm}{\q-\n-2}

\pgfmathsetmacro{\x}{\n+\p+\n-1}
\pgfmathsetmacro{\y}{\n+\q+\n-1}
\pgfmathsetmacro{\nm}{\n-1}
\pgfmathsetmacro{\nmm}{\n-2}
\pgfmathsetmacro{\pm}{\p-1}
\pgfmathsetmacro{\qm}{\q-1}

\foreach \i in {0,...,\em}
{ \draw[->,>=stealth] (\a+\c+\i+0.1,\d) -- (\a+\c+\i+0.9,\d);
}

\foreach \i in {0,...,\nmm}
{ \draw[->,>=stealth] (\a+\i+0.9,0) -- (\a+\i+0.1,0);
}

\foreach \i in {0,...,\fm}
{ \draw[->,>=stealth] (\a-1.5+\i+0.1,-\d) --  (\a-1.5+\i+0.9,-\d);
}

\draw[->,>=stealth]  (\a,\h) -- (\a+\c,\d-\h);
\draw[->,>=stealth] (\a+\c+\e,\d) -- (\a+\nm,\h);
\draw[->,>=stealth] (\a,-\h) -- (\a-1.5,-\d+\h);
\draw[->,>=stealth]  (\a-1.5+\f,-\d) -- (\a+\nm,-\h);

\foreach \i in {0,...,\e}
{ \draw[fill=lightgray] (\a+\c+\i,\d) circle (0.15);
}

\foreach \i in {0,...,\nm}
{ \draw (\a+\i,0) circle (0.15);
}

\foreach \i in {0,...,\f}
{ \draw[fill=black] (\a-1.5+\i,-\d) circle (0.15);
}

\end{tikzpicture}
}
\caption{\label{fig:onedimgamma}The construction of the two-tiles graph $\Gamma_{3,7,9}$.
}
\end{figure}

\begin{definition} Let $Y\subseteq \bsft^\Z$ be a subshift of finite type described by the sequence
$(\bsft;p_1,\dots,p_k)$ and let $g \colon \Gamma_{n,p,q}
\to \bsft$ be a map. We say that $g$  {\em respects} $Y$ if for any $1\leq i\leq k$ and for any directed path $v_1\cdots v_{\ell(p_i)}$ of length $\ell(p_i)-1$ in $\Gamma_{n,p,q}$, the word $g(v_1)\cdots g(v_{\ell(p_i)})$ of length $\ell(p_i)$ in alphabet $\bsft$ is not equal to $p_i$.
\end{definition}

The following is our main theorem of this section.

\begin{theorem} \label{thm:main}
Let $Y$ be the subshift of finite type described by $(\bsft; p_1,\dots, p_k)$. Then the following are equivalent.
\begin{enumerate}[label=\rm{(\arabic*)}, ref=\arabic*]
\item\label{odd}
There is a Borel equivariant map from $F(2^\Z)$ to $Y$.
\item\label{oda}
There is a continuous equivariant map from $F(2^\Z)$ to $Y$.
\item\label{odb}
There are integers $n,p,q$ with $\ell(p_1)\leq n<p,q$ and $\gcd(p,q)=1$, and there is $g \colon \Gamma_{n,p,q}\to \bsft$ which respects $Y$.
\item\label{odc}
For all $n\geq \ell(p_1)$ and for all sufficiently large $p,q>n$,
 there is $g \colon  \Gamma_{n,p,q}\to \bsft$ which respects $Y$.
\end{enumerate}
\end{theorem}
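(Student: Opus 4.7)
The proof naturally splits along the cycle $(4)\Rightarrow(3)\Rightarrow(2)\Rightarrow(1)\Rightarrow(3)$, with $(3)\Rightarrow(4)$ recovered at the end. The trivial implications are $(2)\Rightarrow(1)$ (a continuous map is Borel) and $(4)\Rightarrow(3)$ (pick any pair $p,q$ in the ``sufficiently large'' range with $\gcd(p,q)=1$, say two consecutive integers). The equivalence $(2)\Leftrightarrow(3)$ is essentially the Two-Tiles Theorem \cite[Theorem 2.2.2]{GJKS}: given $g\colon\Gamma_{n,p,q}\to\bsft$ respecting $Y$, one uses coprimality of $p,q$ together with a Borel tiling of $F(2^\Z)$ by intervals of lengths $p$ and $q$ (overlapping in segments of length $n$) to piece $g$ into a continuous equivariant $F(2^\Z)\to Y$, and conversely one reads a respecting map off any continuous equivariant map by restricting it to two well-chosen tiles of a generic orbit. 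I would quote this verbatim.

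The heart of the theorem is $(1)\Rightarrow(3)$. Given a Borel equivariant $f\colon F(2^\Z)\to Y$, the standard fact that every Borel map is continuous on a comeager set, combined with taking $\Z$-translates, produces a shift-invariant dense $G_\delta$ set $C\subseteq F(2^\Z)$ such that $f\!\upharpoonright\! C$ is continuous. Fix a generic $x\in C$, i.e.\ one whose orbit visits every basic open subset of $2^\Z$ infinitely often in both directions. By continuity at $x$, there is $M\geq n:=\ell(p_1)$ such that whenever $y\in C$ agrees with $x$ on $[-M,M]$, $f(y)$ agrees with $f(x)$ on $[0,n-1]$. Let $\sigma:=x\!\upharpoonright\![-M,M]$ and list the occurrences $0=i_0<i_1<i_2<\cdots$ of $\sigma$ in $x$ to the right of $0$; by genericity these are infinite. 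At each $i_k$ we have $(i_k\cdot x)\!\upharpoonright\![-M,M]=\sigma$, and $i_k\cdot x\in C$ since $C$ is shift-invariant, so by equivariance
\[
f(x)\!\upharpoonright\![i_k,i_k+n-1]=f(x)\!\upharpoonright\![0,n-1].
\]
Now pick two indices $k<k'$ so that $p:=i_{k+1}-i_k$ and $q:=i_{k'+1}-i_{k'}$ satisfy $\gcd(p,q)=1$; such a pair exists because the set of gap values of $\sigma$ in a generic $x$ can be made to have gcd $1$ (the forward orbit of $x$ is dense, so by appropriately prescribing positions one can read off two coprime return distances of $\sigma$, enlarging $M$ first if necessary). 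The two words $f(x)\!\upharpoonright\![i_k,i_{k+1}+n-1]$ and $f(x)\!\upharpoonright\![i_{k'},i_{k'+1}+n-1]$ agree on their first and last $n$ letters, hence assemble into a map $g\colon\Gamma_{n,p,q}\to\bsft$. Since $f(x)\in Y$ contains no forbidden pattern at any position, $g$ respects $Y$.

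Finally, $(3)\Rightarrow(4)$ is a purely combinatorial pigeonhole argument. Given $g_0\colon\Gamma_{n_0,p_0,q_0}\to\bsft$ respecting $Y$, with any prescribed $n\geq \ell(p_1)$ we first build a map on $\Gamma_{n,p_0,q_0}$ by extending the two paths (enlarging the overlap zones uses that agreement on the last $n_0\leq n$ letters of one tile equals agreement on the first $n_0$ of the next, and that $Y$ is determined by windows of length $\ell(p_1)\le n$). Then, inside each long tile, once the tile length exceeds $\bsft^{\ell(p_1)}$ some window of length $\ell(p_1)$ must repeat, and inserting the block between two such repetitions yields a strictly longer respecting tile without changing the end segments. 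Iterating produces respecting colorings for all sufficiently large $p$ and $q$ independently, completing $(3)\Rightarrow(4)$.

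The genuine obstacle is the coprimality step in $(1)\Rightarrow(3)$: extracting $p,q$ with $\gcd(p,q)=1$ from return times of a generic point. My plan is to handle this by allowing $M$ (and hence $\sigma$) to grow and invoking density of the orbit to control which gap values appear, rather than by any purely arithmetic manipulation of the gap multiset. Every other step is either citational or a direct pigeonhole.
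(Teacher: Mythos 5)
Your overall architecture matches the paper's: the equivalence of (2)--(4) is quoted from the Two-Tiles Theorem, (2)$\Rightarrow$(1) is trivial, and the real content is getting from a Borel equivariant $f$ back to a finite respecting map, via ``Borel implies continuous on a comeager set'' plus Baire category. The difference is in how the coprime pair $p,q$ is produced, and this is exactly the point you flag as ``the genuine obstacle'' and leave vague. The paper inverts your order of quantifiers: it fixes $n$, obtains from continuity a basic open $V$ (determined by coordinates in $[a,b]$) with $V\cap D\subseteq f^{-1}(U)\cap D$, then \emph{chooses $p,q>\max\{b-a,n\}$ with $\gcd(p,q)=1$ in advance} and observes that $W=(-p\cdot V)\cap V\cap (q\cdot V)$ is a nonempty open set (the three constraints live on disjoint coordinate windows), hence meets the comeager set $D$. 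Any $x\in W\cap D$ then directly yields the two tiles. This prescribes the return positions rather than hunting for coprime gaps a posteriori, and it proves the stronger conclusion (4) (all large coprime $p,q$) in one stroke. Your a-posteriori route can also be closed, and more concretely than your sketch suggests: for a generic $x$ every finite word occurs, so both $\sigma\sigma$ and $\sigma 0\sigma$ occur in $x$, giving occurrence-gaps $2M+1$ and $2M+2$, which are coprime and exceed $n$; note also that you do not need \emph{consecutive} occurrences of $\sigma$, only two pairs of occurrences, so restricting to consecutive-gap values is an unnecessary complication.

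One genuine soft spot: your independent sketch of (3)$\Rightarrow$(4) by pumping a repeated $\ell(p_1)$-window does not work as stated. Repeating the block between two occurrences of the same window lengthens a tile only by multiples of a fixed amount, so you obtain tile lengths in an arithmetic progression, not all sufficiently large lengths. The correct move (implicit in the Two-Tiles Theorem you cite, which already covers (3)$\Leftrightarrow$(4), so this is redundant rather than fatal) is to concatenate $a$ copies of the $p$-tile and $b$ copies of the $q$-tile along their common $n$-blocks; since $n\geq\ell(p_1)$ every forbidden-pattern window lies inside a single constituent tile, and by Sylvester--Frobenius every sufficiently large integer is of the form $ap+bq$ when $\gcd(p,q)=1$.
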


\begin{proof} The equivalence of (\ref{oda})--(\ref{odc}) is the Two-Tiles Theorem \cite[Theorem 2.2.2]{GJKS}. It is clear that (\ref{oda}) implies (\ref{odd}). It suffices to prove (\ref{odd}) implies (\ref{odc}).

Let $f\colon F(2^\Z)\to Y$ be a Borel equivariant map. There is a comeager set $C\subseteq F(2^\Z)$ so that $f\upharpoonright C\colon C\to Y$ is continuous. Let $D=\bigcap_{n\in\Z} n\cdot C$. Then $D$ is still comeager and $f\upharpoonright D\colon D\to Y$ is still continuous. Arbitrarily fix $x_0\in D$ and $n\geq \ell(p_1)$. The set
$$ U=\{ y\in \bsft^\Z\colon y(i)=f(x_0)(i) \mbox{ for all $0\leq i<n$}\} $$
is a basic open set in $Y$, and $x_0\in f^{-1}(U)\cap D$. By the continuity of $f\upharpoonright D$ on $D$, there is a basic open set $V$ of $F(2^\Z)$ such that $x_0\in V\cap D\subseteq f^{-1}(U)\cap D$. Without loss of generality, we may assume 
$$ V=\{z\in F(2^\Z)\colon z(i)=x_0(i) \mbox{ for all $a\leq i\leq b$}\} $$
for some $a<b\in \Z$. Now let $p, q>\max\{b-a, n\}$ be such that $\gcd(p,q)=1$. Then
$$ W=(-p\cdot V)\cap V\cap (q\cdot V) $$
is a nonempty open set in $F(2^\Z)$, and thus $W\cap D\neq\varnothing$ since $D$ is comeager. Now let $x\in W\cap D$ and $y=f(x)$. Then $x\in V\cap D$, and therefore $y\in U$. We also have $p\cdot x\in V\cap p\cdot D=V\cap D$ and therefore $$p\cdot y=p\cdot f(x)=f(p\cdot x)\in U. $$
Similarly, $-q\cdot x\in V\cap D$ and $-q\cdot y\in U$.

We now regard the Cayley graph of $\Z$ between $a-p$ and $b$ as $T_1(n,p,q)$ and between $a$ and $b+q$ as $T_2(n,p,q)$. Define $g\colon T_1(n, p,q)\to \bsft$ by $g(i)=y(i)$ for $a-p\leq i\leq b$, and define
$g\colon T_2(n,p,q)\to \bsft$ also by $g(i)=y(i)$ for $a\leq i\leq b+q$. It is easily seen that the values of $g$ for the first $n$ and last $n$ vertices of $T_1(n,p,q)$ and $T_2(n,p,q)$ coincide, and therefore this definition naturally induces a map $g\colon \Gamma_{n,p,q}\to \bsft$ which is well defined. Moreover, suppose $1\leq i\leq k$ and $v_1\dots v_{\ell(p_i)}$ is a directed path in $\Gamma_{n,p,q}$, then it is entirely contained in either $T_1(n,p,q)$ or $T_2(n,p,q)$. Thus $g(v_1)\dots g(v_{\ell(p_i)})$ is a word which occurs in $y$. Since $y\in Y$, we conclude that $g(v_1)\dots g(v_{\ell(p_i)})$ cannot be equal to $p_i$. This shows that $g$ respects $Y$. 
\end{proof}

Since it has been shown in \cite[Theorem 4.2.1]{GJKS} that the continuous combinatorics is decidable, we have the following immediate corollary.

\begin{corollary}\label{cor:decidable} The Borel combinatorics of $F(2^\Z)$ is decidable. That is, the set of parameters $(\bsft; p_1,\dots, p_k)$ for which there exists a Borel equivariant map from $F(2^\Z)$ to $Y_{(\bsft; p_1,\dots, p_k)}$ is decidable.
\end{corollary}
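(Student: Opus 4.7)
The plan is to reduce decidability of the Borel combinatorics to the already-established decidability of the continuous combinatorics. The equivalence of clauses (\ref{odd}) and (\ref{oda}) in Theorem~\ref{thm:main} says that the set of tuples $(\bsft;p_1,\dots,p_k)$ for which there exists a Borel equivariant map $F(2^\Z)\to Y_{(\bsft;p_1,\dots,p_k)}$ is literally the same set as the one for which there exists a continuous equivariant map. Since the latter set is decidable by \cite[Theorem 4.2.1]{GJKS}, so is the former, and the corollary follows.

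If one prefers to see an explicit algorithm rather than invoke \cite[Theorem 4.2.1]{GJKS} as a black box, I would route the argument through the finitary clause (\ref{odb}) of Theorem~\ref{thm:main}. On input $(\bsft;p_1,\dots,p_k)$, the algorithm enumerates triples $(n,p,q)$ with $\ell(p_1)\leq n<p,q$ and $\gcd(p,q)=1$ in some effective order. For each such triple, the graph $\Gamma_{n,p,q}$ is finite and there are only finitely many maps $g\colon \Gamma_{n,p,q}\to\bsft$, so checking whether any of them respects $Y$ is a finite computation. If a witnessing $g$ is found, the algorithm outputs ``yes''; by clause (\ref{odb}), this certifies existence of a Borel equivariant map.

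The main obstacle, and the only non-routine ingredient, is termination along the negative branch: one needs a computable upper bound $N(\bsft;p_1,\dots,p_k)$ beyond which one may safely stop searching and answer ``no''. This is precisely the content of \cite[Theorem 4.2.1]{GJKS} (whose proof presumably proceeds by a compactness or pumping argument producing such a bound from the two-tiles structure), and it is what makes the search procedure above into an actual decision algorithm. Once this bound is cited, there is nothing further to prove; the corollary reduces to the conjunction of Theorem~\ref{thm:main} and \cite[Theorem 4.2.1]{GJKS}.
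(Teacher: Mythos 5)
Your first paragraph is exactly the paper's argument: the paper derives the corollary immediately from the equivalence of the Borel and continuous cases in Theorem~\ref{thm:main} together with the decidability of the continuous combinatorics from \cite[Theorem 4.2.1]{GJKS}. The proposal is correct and takes essentially the same approach; the extra discussion of an explicit search algorithm is a reasonable elaboration but not needed.
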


\section{An algorithm for Borel chromatic numbers}\label{sec:4}

In the rest of this paper, we focus on the problem of computation of Borel chromatic numbers of the Schreier graphs $G_S$ for finite generating sets of $\Z$. We will present a generating set $S$ by the enumeration of its positive members in increasing order. We introduce the following notation.

\begin{definition} Denote by $\Sigma$ the set of all strictly increasing sequence of positive integers $a_1<\dots<a_n$ such that $\gcd(a_1,\dots, a_n)=1$. For $(a_1,\dots, a_n)\in \Sigma$, denote by $\chi(a_1,\dots, a_n)$ the Borel chromatic number of the Schreier graph $G_S$, where $S=\{\pm a_1,\dots, \pm a_n\}$. 
\end{definition}

We will show in the next section that for any $(a_1,\dots, a_n)\in \Sigma$, 
$$ 3\leq \chi(a_1,\dots, a_n)\leq 2n. $$
By Corollary~\ref{cor:decidable}, $\chi$ is a computable function. In fact, given $(a_1,\dots, a_n)\in\Sigma$, to compute $\chi(a_1,\dots, a_n)$ it suffices to successively check if there is a Borel proper $\bsft$-coloring for $\bsft=3, \dots, 2n$, until a postive answer is found; the smallest such $\bsft$ is $\chi(a_1,\dots, a_n)$.

In the following we describe an algorithm {\tt BPC} for checking whether there is a Borel proper coloring.

{\tt Input:} A sequence $(a_1,\dots, a_n)\in \Sigma$ and an integer $\bsft\in [3, 2n]$.

{\tt Step 1:} Let $m=a_n+1$ and let $\bsft^m$ be the set of all words of length $m$ in the alphabet $\bsft$. Form the set
$$ V=\left\{ u\in \bsft^m\colon u(j)\neq u(j+a_i) \mbox{ for all $1\leq i\leq n$ and $0\leq j\leq a_n-a_i$}\right\}. $$
Construct a directed graph $H=(V, E)$, where
$$ uv\in E\iff \mbox{ for all $0\leq j<m$, $v(j)=u(j+1)$.} $$

{\tt Step 2:} For each connected component $C$ of $H$, compute the  value of the following function $f$:
$f(C)=\infty$, if there are no directed cycles in $C$; otherwise, $f(C)=\mbox{the $\gcd$ of the lengths of all directed cycles}$. 
 If for some connected component $C$ of $H$, $f(C)=1$, then return and output {\tt yes}. Otherwise, output {\tt no}.

This finishes the algorithm {\tt BPC}.

The correctness of the algorithm {\tt BPC} follows from the proof of \cite[Theorem 4.2.1]{GJKS}. The value in {\tt Step 2} is known as the {\em period} of the directed graph, or when the graph is represented by its adjacency matrix, the {\em period} of the matrix. It is well studied in algebraic graph theory, symbolic dynamics and matrix theory (see, e.g., \cite[Section 4.5]{LindBook} and \cite[Section 8.5]{HornBook}). There are well-known algorithms for {\tt Step 2} which runs in $O(N^3)$ time for directed graphs with $N$ vertices (for instance, this follows from Wielandt's theorem \cite[Corollary 8.5.8]{HornBook} on the primitivity of nonnegative matrices). However, the only upper bound we know for the size of a connected component of $H$ is $\bsft^m$. Thus {\tt BPC} runs in time which is exponential in $\max S$.

\section{Some bounds for Borel chromatic numbers}\label{sec:5}

In this section we prove some results that provide lower and upper bounds for the function $\chi(a_1,\dots, a_n)$. In some cases they give the correct values of $\chi$.

We will use the following notion in our proofs. We regard any finite simple directed path $T$ as an interval in $\Z$ with $xy\in T$ if and only if $x+1=y$. Thus we have an order on $T$ and can perform basic arithmetic operations on the vertices of $T$. Similarly for infinite or bi-infinite simple directed paths. Recall that for integers $a, b$ with $1\leq a\leq b$, $T(a, b)$ is the simple directed path of length $a+b-1$. We let $T(a, b)^{-}$ denote the subgraph of $T(a, b)$ induced by its first $a$ vertices, and $T(a, b)^+$ denote the subgraph of $T(a, b)$ induced by its last $a$ vertices.

\begin{definition} Let $S=\{\pm a_1,\dots, \pm a_n\}$ be a generating set of $\Z$ with $(a_1,\dots, a_n)\in \Sigma$. Let $G$ be a finite or infinite simple directed path. Let $\bsft\geq 1$ be an integer. An {\em $S$-coloration} of $G$ {\em with $\bsft$ colors} is a function $c\colon G\to \bsft$ such that for any $x, y\in G$, $c(x)\neq c(y)$ whenever $x-y\in S$. 
\end{definition}

The following lemma is an immediate corollary of Theorem~\ref{thm:main} and is our basic tool in the study of the function $\chi$. 

\begin{lemma}\label{lem:basic} For any generating set $S=\{\pm a_1,\dots, \pm a_n\}$ with $(a_1,\dots, a_n)\in \Sigma$, and for any integer $k\geq 1$, the following hold.
\begin{enumerate}
\item[\rm (a)] $\chi(a_1,\dots, a_n)>k$ if and only if for some $\ell\geq a_n+1$, there are arbitrarily large $p, q>\ell$ with $\gcd(p, q)=1$
such that there are no $S$-colorations $c_1$ of $T_1(\ell, p, q)$ and $c_2$ of $T_2(\ell, p, q)$ with $k$ colors such that the $c_1$ and $c_2$ agree on the subgraphs $T_1(\ell, p, q)^{\pm}$ and $T_2(\ell, p, q)^{\pm}$.
\item[\rm (b)] $\chi(a_1,\dots, a_n)\leq k$ if and only if for some $\ell\geq a_n+1$, there are $p, q>\ell$ with $\gcd(p, q)=1$ and $S$-colorations $c_1$ of $T_1(\ell, p, q)$ and $c_2$ of $T_2(\ell, p, q)$ with $k$ colors such that the $c_1$ and $c_2$ agree on the subgraphs $T_1(\ell, p, q)^{\pm}$ and $T_2(\ell, p, q)^{\pm}$.
\end{enumerate}
\end{lemma}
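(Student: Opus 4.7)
The plan is to derive this lemma directly from Theorem~\ref{thm:main}, together with the translation between proper $k$-colorings of $G_S$ and equivariant maps into a specific subshift of finite type that was spelled out at the end of Section~\ref{sec:2}. Concretely, fix $k$ and let $Y=Y_{(k;p_1,\dots,p_m)}$ be the subshift of finite type whose alphabet is $\{0,\dots,k-1\}$ and whose forbidden patterns are exactly the patterns of length $a_n+1$ of the form $p(i)=p(i+s)$ for some $0\leq i<a_n+1$, $s\in S$, $0\leq i+s<a_n+1$. Then, as recalled in Section~\ref{sec:2}, $\chi(a_1,\dots,a_n)\leq k$ if and only if there is a Borel equivariant map $F(2^\Z)\to Y$.

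Next I would translate the ``$\Gamma_{\ell,p,q}$ respects $Y$'' condition in Theorem~\ref{thm:main} into the language of $S$-colorations. A map $g\colon\Gamma_{\ell,p,q}\to k$ corresponds, by the very construction of $\Gamma_{\ell,p,q}$, to a pair $(c_1,c_2)$ where $c_i\colon T_i(\ell,p,q)\to k$ and the two restrictions coincide on the identified first/last $\ell$ vertices, i.e., on $T_1(\ell,p,q)^{\pm}$ and $T_2(\ell,p,q)^{\pm}$. The key geometric fact is that, because $\ell\geq a_n+1$ and the two tiles share only the left and right $\ell$-vertex subpaths, every directed path in $\Gamma_{\ell,p,q}$ of length $a_n$ (that is, with $a_n+1$ vertices, the common length of the forbidden patterns) is entirely contained in either $T_1(\ell,p,q)$ or $T_2(\ell,p,q)$. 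Under this correspondence, $g$ respects $Y$ iff no forbidden pattern occurs as a consecutive word in $c_1$ or $c_2$; by the explicit description of the forbidden patterns, this is in turn equivalent to saying that for any $x,y$ in $T_i(\ell,p,q)$ with $x-y\in S$ one has $c_i(x)\neq c_i(y)$, that is, both $c_1$ and $c_2$ are $S$-colorations.

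Part (b) now follows from the equivalence (\ref{odd})$\Leftrightarrow$(\ref{odb}) of Theorem~\ref{thm:main}: $\chi(a_1,\dots,a_n)\leq k$ iff there is a Borel equivariant map iff there exist $\ell\geq a_n+1$ and coprime $p,q>\ell$ together with $g\colon\Gamma_{\ell,p,q}\to k$ respecting $Y$, and the last condition is, via the above dictionary, exactly the condition in (b). Part (a) follows by taking the contrapositive and using the equivalence (\ref{odd})$\Leftrightarrow$(\ref{odc}): $\chi(a_1,\dots,a_n)>k$ iff it is not the case that for every $\ell\geq a_n+1$ and all sufficiently large $p,q>\ell$ there is a respecting $g$, which unwinds to saying that there exists some $\ell\geq a_n+1$ for which infinitely many large $p,q>\ell$ admit no respecting $g$. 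To get the coprime refinement demanded by (a), I would use the observation that coprime pairs $(p,q)$ are cofinal among pairs of large integers, so ``arbitrarily large $p,q$'' witnessing failure can be chosen coprime.

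The proof is essentially bookkeeping; the only point requiring care is verifying the translation in the second paragraph, namely that ``$g$ respects $Y$'' on $\Gamma_{\ell,p,q}$ is literally the conjunction of ``$c_1$ and $c_2$ are $S$-colorations of $T_1(\ell,p,q)$ and $T_2(\ell,p,q)$'' and ``$c_1,c_2$ agree on $T_1^{\pm},T_2^{\pm}$'', which hinges on the containment of length-$a_n$ directed paths in a single tile. Once this translation is in hand, parts (a) and (b) are immediate from Theorem~\ref{thm:main}.
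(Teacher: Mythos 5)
Your proposal is exactly the derivation the paper has in mind: the paper states Lemma~\ref{lem:basic} as an ``immediate corollary'' of Theorem~\ref{thm:main} without writing out a proof, and your two ingredients --- the Section~\ref{sec:2} dictionary between proper $k$-colorings and equivariant maps into the subshift with forbidden patterns of length $a_n+1$, and the identification of maps $g\colon\Gamma_{\ell,p,q}\to k$ respecting $Y$ with pairs of $S$-colorations agreeing on $T_1^{\pm}$, $T_2^{\pm}$ (using that a directed path on $a_n+1\leq\ell$ vertices cannot meet the interiors of both tiles) --- are precisely the intended bookkeeping.

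One step in your treatment of part (a) is misjustified, though the conclusion is fine. Negating clause (\ref{odc}) of Theorem~\ref{thm:main} gives arbitrarily large bad pairs $p,q>\ell$, but the cofinality of coprime pairs among all pairs does not let you conclude that some of these \emph{bad} pairs are coprime; a priori the failures could all occur at non-coprime pairs. The correct route for that direction is to negate clause (\ref{odb}) instead: $\chi>k$ implies there is \emph{no} triple $(\ell,p,q)$ with $\gcd(p,q)=1$ admitting a respecting $g$, so for any fixed $\ell\geq a_n+1$ \emph{every} coprime pair $p,q>\ell$ is bad, which immediately yields arbitrarily large bad coprime pairs. (The converse direction of (a), from arbitrarily large bad coprime pairs to $\neg(\ref{odc})$ and hence $\chi>k$, is as you state.) With that one substitution the argument is complete.
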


Now we consider lower bounds. 

\begin{definition} Given a generating set $S=\{\pm a_1,\dots, \pm a_n\}$ with $(a_1,\dots, a_n)\in \Sigma$, let $K_S$ denote the subgraph of the Cayley graph $C_S$ induced by $\{0, a_1,\dots, a_n\}$, and call it the {\em core subgraph} of $G_S$. Denote by $\lambda_S$ the size of the largest clique in $K_S$, and by $\kappa_S$ the chromatic number of $K_S$.
\end{definition}

The following result gives some general lower bounds.

\begin{theorem}\label{thm:lower} For any generating set $S=\{\pm a_1,\dots, \pm a_n\}$ with $(a_1,\dots, a_n)\in \Sigma$, we have
$$\chi(a_1,\dots, a_n)\geq \kappa_S+1\geq \lambda_S+1\geq 3. $$
\end{theorem}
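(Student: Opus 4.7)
The three-inequality chain $\chi(a_1,\dots,a_n) \geq \kappa_S + 1 \geq \lambda_S + 1 \geq 3$ breaks into parts of very different difficulty. The rightmost two are essentially immediate: $\lambda_S \geq 2$ because $\{0, a_1\}$ is an edge of $K_S$ (since $a_1 \in S$), and $\kappa_S \geq \lambda_S$ is the standard bound of the chromatic number by the clique number. All the real work goes into the leftmost inequality $\chi(a_1,\dots,a_n) \geq \kappa_S + 1$, which I would prove by contradiction using Lemma~\ref{lem:basic}.

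Suppose $\chi(a_1,\dots,a_n) \leq \kappa_S$. By Lemma~\ref{lem:basic}(b) there exist $\ell \geq a_n+1$, coprime $p, q > \ell$, and $S$-colorations $c_1$ of $T_1(\ell,p,q)$ and $c_2$ of $T_2(\ell,p,q)$ with $\kappa_S$ colors, agreeing on $T_i^{\pm}$. The identification of the first and last $\ell$ vertices of $T_1$ in $\Gamma_{\ell,p,q}$ forces $c_1(k) = c_1(k+p)$ for $k \in [0,\ell-1]$, so setting $\tilde{c}_1(i) := c_1(i \bmod p)$ defines a well-defined $p$-periodic extension to $\Z$; a short check (using $\ell \geq a_n+1$ so that any $S$-constraint between positions at distance $\leq a_n$ fits inside a single copy of $T_1$) shows $\tilde{c}_1$ is a proper $S$-coloration of $\Z$. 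Similarly $c_2$ yields a $q$-periodic proper $\kappa_S$-coloration $\tilde{c}_2$, and the agreement on $T_i^-$ gives $\tilde{c}_1 = \tilde{c}_2$ on $[0, \ell-1]$. If I can show $\tilde{c}_1 = \tilde{c}_2$ on all of $\Z$, the conclusion follows: the common coloration is both $p$- and $q$-periodic with $\gcd(p,q) = 1$, hence constant, which cannot be a proper $S$-coloration as the edge $\{0, a_1\}$ would be monochromatic.

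The crux, and the main obstacle, is this propagation of agreement. It reduces to a rigidity statement: in any proper $\kappa_S$-coloration of the Cayley graph $C_S$, the color at a position $m$ is uniquely determined by the colors at its past $S$-neighbors $M := \{m-a_1,\dots,m-a_n\}$. The key observation is that the induced subgraph of $C_S$ on $M$ is isomorphic, via $m - a_j \mapsto a_j$, to the auxiliary graph $N_S$ on vertex set $\{a_1,\dots,a_n\}$ with edges $\{a_i, a_j\}$ whenever $|a_i - a_j| \in S$. Because $0$ is adjacent in $K_S$ to every $a_i$ (as $a_i \in S$), adjoining $0$ as a universal vertex to $N_S$ reproduces $K_S$, so $\kappa_S = \chi(K_S) = 1 + \chi(N_S)$, i.e.\ $\chi(N_S) = \kappa_S - 1$. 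The coloration restricted to $M$ is thus a proper coloring of a copy of $N_S$, and therefore uses at least $\kappa_S - 1$ distinct colors; it must also use at most $\kappa_S - 1$ colors (else no color is available for position $m$), so exactly $\kappa_S - 1$ appear on $M$, forcing the color at $m$ to be the unique missing one. Consequently, if $\tilde{c}_1 = \tilde{c}_2$ on $[0, m-1]$ for some $m \geq a_n + 1$, then $M \subseteq [0, m-1]$ and $\tilde{c}_1(m) = \tilde{c}_2(m)$. Forward induction gives agreement on $[0,\infty)$, and the symmetric argument using future $S$-neighbors gives agreement on $(-\infty,0]$, producing the desired contradiction.
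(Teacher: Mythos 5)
Your proof is correct, and its core is the same as the paper's: the rigidity claim that in a proper $\kappa_S$-coloration the color at a vertex is the unique color missing from its $n$ predecessors, which you derive by observing that the predecessor set induces a copy of $K_S$ minus its universal vertex $0$. The paper proves exactly this (phrased as: any $S$-coloration of $T(\ell,m)$ with $\kappa_S$ colors is determined by its restriction to the first $\ell$ vertices, via the isomorphic copy $X_S$ of $K_S$ on $\{\ell,\ell-a_1,\dots,\ell-a_n\}$). Where you genuinely diverge is the endgame. The paper never passes to colorations of $\Z$; it instead specializes to $q=p+a_1$, uses the determinism to conclude $c_1$ and $c_2$ agree on their common domain, and reads off the local contradiction $c_1(p+a_1)=c_2(q)=c_2(0)=c_1(p)$ with $a_1\in S$. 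Your route --- extend $c_1,c_2$ to $p$- and $q$-periodic proper colorations of $\Z$ (a correct step, since $\ell\geq a_n+1$ keeps every $S$-constraint inside one fundamental domain), propagate agreement by rigidity, and invoke $\gcd(p,q)=1$ to force the common coloration to be constant --- avoids any special choice of $q$ at the cost of the (routine) verification that the periodic extensions are proper. As a small simplification of your version, the backward propagation is not needed: a function on $[0,\infty)$ that is both $p$- and $q$-periodic with $\gcd(p,q)=1$ is already constant, since the numerical semigroup generated by $p$ and $q$ contains all sufficiently large integers.
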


\begin{proof} It is clear that $\kappa_S\geq \lambda_S\geq 2$. It remains to show that $\chi(a_1,\dots, a_n)>\kappa_S$. Denote $\ell=a_n+1$ and $k=\kappa_S$. We will use Lemma~\ref{lem:basic} (a).

We first claim that for any $m\geq \ell$, an $S$-coloration $c$ of $T(\ell, m)$ with $k$ colors is determined by the restriction of $c$ on $T(\ell, m)^-$. This is to say, if $c, c'$ are both $S$-colorations of $T(\ell, m)$ with $k$ colors, and $c\upharpoonright T(\ell, m)^{-}=c'\upharpoonright T(\ell, m)^-$, then $c=c'$. For notational simplicity, identify $T(\ell, m)$ with the interval $[0, \ell+m)$ in $\Z$. To prove the claim, it suffices to show that $c(\ell)=c'(\ell)$; for the rest of the vertices in $T(\ell, m)$, the claim follows by an induction from left to right. Now consider the subgraph $X_S$ of the Cayley graph $C_S$ induced by $\{\ell, \ell-a_1, \dots, \ell-a_n\}$. $X_S$ is isomorphic to $K_S$ via the isomorphism $x\mapsto \ell-x$. Thus the chromatic number of $X_S$ is $k$. Let $Y=X_S\setminus \{\ell\}$. Then $Y\subseteq T(\ell, m)^-$. We note that $c(Y)$ contains exactly $k-1$ colors. This is because, if $|c(Y)|<k-1$, then any extension of $c$ to a proper coloring on $X_S=Y\cup \{\ell\}$ uses $<k$ colors, contradicting the fact that the chromatic number of $X_S$ is $k$. It follows that $c(\ell)$ is the only color not used in $c(Y)$. Similarly, $c'(\ell)$ is the only color not used in $c'(Y)$. Since $c(Y)=c'(Y)$, we have $c(\ell)=c'(\ell)$ as required. 

Now we are ready to prove $\chi(a_1,\dots, a_n)>k$. Toward a contradiction, assume this fails. Then by Lemma~\ref{lem:basic} (a), for sufficiently large $p, q>\ell$ with $\gcd(p, q)=1$, there are $S$-colorations $c_1$ of $T_1(\ell, p, q)$ and $c_2$ of $T_2(\ell, p, q)$ with $k$ colors such that $c_1$ and $c_2$ agree on $T_1(\ell, p, q)^{\pm}$ and $T_2(\ell, p, q)^{\pm}$. Let $p$ and $q=p+a_1$ be sufficiently large with $\gcd(p, q)=1$. Let $c_1$ and $c_2$ be given with the above property. Then by the above claim, $c_1$ is determined by $c_1\upharpoonright T_1(\ell, p, q)^-$ and $c_2$ is determined by $c_2\upharpoonright T_2(\ell, p, q)^{-}$. Since 
$$c_1\upharpoonright T_1(\ell, p, q)^-=c_2\upharpoonright T_2(\ell, p, q)^{-}=c_2\upharpoonright T_2(\ell, p, q)^+,$$
 we have that 
$$ c_1(p+a_1)=c_2(p+a_1)=c_2(q)=c_2(0). $$
However, $c_1(p)=c_1(0)=c_2(0)$. Thus we have $c_1(p)=c_1(p+a_1)$, contradicting the assumption that $c_1$ is an $S$-coloration.
\end{proof}

Next we turn to upper bounds.

\begin{theorem} For any $n>1$ and any $(a_1,\dots, a_n)\in \Sigma$, we have
$$ \chi(a_1,\dots, a_n)\leq \left\lfloor \displaystyle\frac{3}{2}n\right\rfloor +1. $$
\end{theorem}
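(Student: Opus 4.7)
My plan is to apply Lemma~\ref{lem:basic}(b) with $k = \lfloor 3n/2 \rfloor + 1$ and $\ell = a_n + 1$: it suffices to produce coprime integers $p, q > \ell$ together with $S$-colorations $c_1$ of $T_1(\ell, p, q)$ and $c_2$ of $T_2(\ell, p, q)$ in $k$ colors that agree on the four identified endpoint subgraphs. Equivalently, I must exhibit proper $S$-colorings of the cyclic graphs $\Z/p\Z$ and $\Z/q\Z$ (with forbidden cyclic differences $\pm a_1, \dots, \pm a_n$) that coincide on some common arc of $\ell$ consecutive vertices.

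The first thing I would try is the ``clock'' coloring $c_d(j) = j \bmod d$ for moduli $d$ in the range $[2, k]$. This is a proper $S$-coloring of $\Z$ whenever $d \nmid a_i$ for every $i$, and it induces a proper coloring on a cycle $\Z/p\Z$ provided $p$ lies in a residue class modulo $d$ which avoids the finite set of ``bad'' residues $\{a_i \bmod d : 1 \leq i \leq n\}$ (these being exactly the residues $r$ for which the wrap-around edge creates a conflict). The first subtask is therefore to find some $d$ with $2 \leq d \leq k$ and $d \nmid a_i$ for all $i$. A pigeonhole/divisor-counting argument shows this is possible in the generic case: the $n$ generators forbid at most $n$ of the $k - 1 = \lfloor 3n/2 \rfloor$ candidate moduli in $\{2, \dots, k\}$, leaving at least $\lfloor n/2 \rfloor + 1$ usable values of $d$. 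Once such a $d$ is fixed, I pick $p$ and $q$ as two distinct large primes in a common residue class modulo $d$ which is coprime to $d$ and avoids the bad residues (possible by Dirichlet provided $\phi(d)$ exceeds the number of bad residues); then $p, q$ are coprime, both exceed $\ell$, and the two cyclic clock colorings both start with $0, 1, \dots, d - 1, 0, 1, \dots$, so they automatically agree on the first $\ell$ vertices.

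The main obstacle is the exceptional situation in which every $d \leq k$ divides some $a_i$, which does occur, for example when $n = 2$ and $(a_1, a_2) = (2, 3)$ since $\{2, 3\}$ is fully covered. For such $S$ the clock construction fails and a bespoke coloring is required. My plan here is to color $\Z$ greedily with $n + 1$ ``primary'' colors (which suffices because at each vertex only the $n$ left-neighbors at distances $a_1, \dots, a_n$ are already colored), and to reserve the remaining $\lfloor n/2 \rfloor$ ``buffer'' colors for a short transition region of length $O(a_n)$ adjacent to the seam where the line is closed into a cycle. The technical heart of the argument is then a combinatorial lemma showing that, after a careful choice of the seam location, the wrap-around conflicts to be resolved in the transition region reduce to at most $\lfloor n/2 \rfloor$ genuinely new color exclusions, because many of the pairs $(j + a_i, j)$ across the seam redundantly exclude the same color. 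Establishing this packing-type estimate is where I expect the bulk of the work to lie, and it is what determines the precise coefficient $3/2$ in the bound.
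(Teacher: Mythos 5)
There are two genuine gaps. First, the divisor-counting step in your ``generic'' case is false: a single generator $a_i$ excludes \emph{every} modulus $d\in[2,k]$ that divides $a_i$, not just one, so the $n$ generators can forbid far more than $n$ of the $k-1$ candidate moduli. For instance, take $n=2$ and $(a_1,a_2)=(5,12)$: here $k=4$ and each of $d=2,3,4$ divides $12$, so no clock modulus exists even though nothing about this $S$ is degenerate. Consequently the ``exceptional situation'' is not exceptional, and the whole weight of the proof falls on your fallback construction. (There are further unverified details in the clock case itself: the Dirichlet step needs $\phi(d)$ to exceed the number of bad residues $\{\pm a_i \bmod d\}$, which can fail, e.g.\ $d=3$ with some $a_i\equiv 1 \pmod 3$ kills both nonzero residues; and two integers in the residue class $0 \bmod d$ are never coprime, so that class cannot be used for both $p$ and $q$.)

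Second, the fallback construction is exactly where the theorem's content lies, and you have not proved it: the ``packing-type estimate'' asserting that the seam conflicts cost only $\lfloor n/2\rfloor$ extra colors is stated as a hope, not established, and it is not obviously true as formulated. The paper's proof handles this differently and in full: it colors almost all of $T_1$ greedily with $n+1$ colors, then closes the seam one vertex at a time; when the $2n$ colored $S$-neighbors of an uncolored vertex already use all $k$ colors, a counting argument (this is precisely where $k-1=\lfloor\frac32 n\rfloor$ enters) shows some left-neighbor carries a color unique in that neighborhood, so its color can be transferred to the uncolored vertex, pushing the ``hole'' leftward; a second claim shows this propagation terminates before reaching the region where only $n+1$ colors were used. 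Your sketch gestures at a similar division into $n+1$ primary colors plus buffer colors, but without the transfer-and-terminate mechanism (or a proof of your packing lemma) the argument is incomplete at its crucial point.
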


\begin{proof} Let $S=\{\pm a_1, \dots, \pm a_n\}$, $\ell=a_n+1$ and $k=\lfloor \frac{3}{2}n\rfloor +1$. By Lemma~\ref{lem:basic} (b), it suffices to find $p, q>\ell$ with $\gcd(p, q)=1$ and $S$-colorations $c_1$ of $T_1(\ell, p, q)$ and $c_2$ of $T_2(\ell, p, q)$ with $k$ colors such that $c_1$ and $c_2$ agree on $T_1(\ell, p, q)^{\pm}$ and $T_2(\ell, p, q)^{\pm}$.

We first claim that for any $m\geq \ell$, there is an $S$-coloration $c$ of $T(\ell, m)$ with $n+1$ colors. We identify $T(\ell, m)$ with the interval $[0, \ell+m)$ in $\Z$ and define such an $S$-coloration by induction from left to right using a greedy algorithm. Suppose $c$ has been defined for all $j<i$. To define $c(i)$ we consider the subgraph $X_S$ of $C_S$ induced by $\{i, i-a_1,\dots, i-a_n\}\cap [0,i]$. The vertex $i$ has degree at most $n$ in $X_S$, and therefore we can assign $c(i)$ to be the first unused color in the $n+1$ many colors. This finishes the definition of $c(i)$. 

Let $A$ be a set of $n+1$ many colors and $B$ be a set of $k-n-1$ many colors so that $A\cap B=\varnothing$. Now fix a particular $S$-coloration $c$ of $T(\ell, \ell)$ with colors in $A$ and let $d$ be the restriction of $c$ on $T(\ell, \ell)^-$. 
Arbitrarily fix $p, q>3\ell^2$ with $\gcd(p, q)=1$. To complete our proof, we only need to define $S$-colorations $c_1$ of $T_1(\ell, p, q)$ and $c_2$ of $T_2(\ell, p, q)$ with colors in $A\cup B$ so that
$$ c_1\upharpoonright T_1(\ell, p, q)^{\pm}=c_2\upharpoonright T_2(\ell, p, q)^{\pm}. $$
For this, first let 
$$ c_1\upharpoonright T_1(\ell, p, q)^{\pm}=c_2\upharpoonright T_2(\ell, p, q)^{\pm}=d. $$
This will guarantee that $c_1$ and $c_2$ are as required when we finish our definition as long as $c_1$ and $c_2$ are $S$-colorations. Next, we extend this partial definition of $c_1$ to the entire $T_1(\ell, p, q)$, which is identified with the interval $[0, \ell+p)$ in $\Z$, by the following procedure. 

Step 1: Extend the partial definition of $c_1$ to $[0, p-\ell)\cup [p, \ell+p)$ by induction from left to right using a greedy algorithm similar to the one for the above claim. The result is a partial $S$-coloration with colors in $A$, since $p>2\ell$. We denote the resulting partial $S$-coloration by $d_{-1}$. 

Step 2: For $0\leq i<\ell$, define by induction a partial $S$-coloration $d_i$ with colors in $A\cup B$ so that $d_i$ is defined exactly on $[0, p-\ell+i] \cup [p, \ell+p)$, and $d_i$ and $d_{i-1}$ agree on 
$$\big[0, p-(2i+3)\ell\big)\cup \big[p, \ell+p\big). $$
 In the end, let $c_1=d_{\ell-1}$; then $c_1$ is an $S$-coloration with $k$ colors on the entire $[0, \ell+p)$, and $d_i$ and $d_{-1}$ agree on $[0,\ell)\cup [p, \ell+p)$ since $p>3\ell^2$.

Assume $d_{i-1}$ has been defined for some $0\leq i<\ell$. In the inductive step, we define $d_i$. Let $Y_S$ be the subgraph of $G_S$ induced by the set
$$Y_i=\{p-\ell+i-a_n, \dots, p-\ell+i-a_1, p-\ell+i, p-\ell+i+a_1, \dots, p-\ell+i+a_n\}. $$
Note that by our inductive hypothesis, $d_{i-1}(p-\ell+i)$ is not defined. We consider two cases.

Case 1: The partial coloration $d_{i-1}\upharpoonright Y_i$ uses $<k$ many colors. In this case expand $d_{i-1}$ to $d_i$ by defining $d_i(p-\ell+i)$ to be an unused color in $A\cup B$. This finishes the inductive definition in this case.

Case 2: The partial coloration $d_{i-1}\upharpoonright Y_i$ uses all $k$ colors in $A\cup B$. 

In this case we make the following claim.
\begin{quote}
Claim (C2): There is $j\in Y_i$ with $j<p-\ell+i$ so that for any $j'\in Y_i$ with $j'\neq j$, either $d_{i-1}(j')$ is undefined or $d_{i-1}(j')\neq d_{i-1}(j)$. 
\end{quote}
If Claim (C2) fails, then for all $j\in Y_i$ with $j<p-\ell+i$, there is some $j'\in Y_i$ with $j'\neq j$ and $d_{i-1}(j')=d_{i-1}(j)$. Since there are $n$ many elements in $\{ j\in Y_i\colon j<p-\ell+i\}$ and $d_{i-1}$ is defined for at most $2n$ many elements in $Y_i$,  we would use at most $\lfloor \frac{3}{2}n\rfloor=k-1$ many colors for $d_{i-1}\upharpoonright Y_i$, contradicting our case hypothesis.  

To continue our definition of $d_i$, we let $j\in Y_i$ be the least witness for Claim (C2), and define a partial $S$-coloration $d'$ by letting
$$ d'(x)=\left\{\begin{array}{ll} d_{i-1}(j), & \mbox{ if $x=p-\ell+i$,} \\ d_{i-1}(x), & \mbox{ if $x\in [0, j)\cup (j, p-\ell+i)\cup [p, \ell+p)$.}
\end{array}\right. $$
Intuitively, $d'$ is obtained from $d_{i-1}$ by giving the color of $j$ to $p-\ell+i$, so that $d'(p-\ell+i)$ is now defined but $d'(j)$ is now undefined. Because of the property of $j$ established by Claim (C2), $d'$ is a partial $S$-coloration.

Now redo the construction of the inductive step  with $p-\ell+i$ replaced by $j$ and $d_{i-1}$ replaced by $d'$. Namely, consider the subgraph of $G_S$ induced by $\{j-a_n, \dots, j-a_1, j, j+a_1,\dots, j+a_n\}$ and consider the two cases. If Case 1 occurs then the construction stops with a definition of the partial $S$-coloration for $j$. If Case 2 occurs then a new partial $S$-coloration is defined so that the color of $j$ is defined but a newly undefined vertex $<j$ is identified by an application of Claim (C2). 

Repeat this procedure until Case 1 finally occurs. We make the following claim.
\begin{quote}
Claim (C1): Case 1 happens before the newly undefined vertex falls into the interval $\big[0, p-(2i+3)\ell\big)$. 
\end{quote}
Granting Claim (C1), we know that this procedure terminates, and the resulting partial $S$-coloration $d_i$ satisfies the inductive hypothesis. 


To see that Claim (C1) holds, we note that by the inductive hypothesis, $d_{i-1}$ and $d_{-1}$ agree on $\big[0, p-(2i+1)\ell\big)\cup \big[p, \ell+p\big)$. It follows that on $\big[0, p-(2i+1)\ell\big)$, $d_{i-1}$ uses only colors in $A$.  Also note that each time Case 2 happens in the procedure, the undefined vertex is replaced by some other vertex that is $<\ell$ from the former vertex. Thus, if the Claim (C1) fails, there would be some partial $S$-coloration $d''$ resulting from the procedure and some $$j''\in [p-(2i+3)\ell, p-(2i+2)\ell)$$ such that $$d''\upharpoonright \{j''-a_n, \dots, j''-a_1, j'', j''+a_1, \dots, j''+a_n\}$$ uses all $k$ colors in $A\cup B$. However, note the following property guaranteed by our procedure:  for any $N<p-(2i+1)\ell$ and for any partial $S$-coloration $D$ defined in the procedure, if Case 1 has not occurred, then the set of colors used by $D\upharpoonright [0, N]$ is contained in the set of colors used by $d_{-1}\upharpoonright [0, N]$. Since $d_{i-1}\upharpoonright [0, p-(2i+1)\ell)$  uses only colors in $A$, it follows that $d''\upharpoonright \{j''-a_n, \dots, j''-a_1, j'', j''+a_1, \dots, j''+a_n\}$ uses only colors in $A$, and therefore cannot be all $k$ colors in $A\cup B$. This is a contradiction. Claim (C1) is thus proved.

This finishes the inductive definition of Step 2, and also the definition of $c_1$ on the entire $T_1(\ell, p, q)$.

A similar procedure gives a definition of an $S$-coloration $c_2$ on the entire $T_2(\ell, p, q)$. It is clear that $c_1$ and $c_2$ are as required.
\end{proof}

For notational simplicity in the rest of the proofs, we are going to assume that the set of colors is just $k=\{0, \dots, k-1\}$, and we will present $c_1$ and $c_2$ as words in the alphabet $k$. When $\ell, p, q$ are clear from the context, we are going to omit writing them and just write $T_1, T_2$, etc.

\begin{theorem} \label{thm:cong} Let $S=\{\pm a_1, \dots, \pm a_n\}$ be a generating set of $\Z$ with $(a_1, \dots, a_n)\in \Sigma$. Let $m\geq 1$ be an integer. If $S\cap (m+1)\Z=\varnothing$, then $\chi(a_1, \dots, a_n)\leq m+2$.
\end{theorem}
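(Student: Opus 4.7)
The plan is to apply Lemma~\ref{lem:basic}(b): it suffices to find coprime $p,q>\ell$ and $S$-colorations $c_1$ of $T_1(\ell,p,q)$, $c_2$ of $T_2(\ell,p,q)$ with $m+2$ colors that agree on all four endpoint subgraphs (which, in $\Gamma_{\ell,p,q}$, are identified to a single $\ell$-vertex path). I would take $p$ to be a large multiple of $m+1$ and $q=p+1$, so that $\gcd(p,q)=1$ and $q\equiv 1\pmod{m+1}$, and choose the common endpoint pattern to be $d(j)=j\bmod (m+1)$.

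For $c_1$ I would simply use $c_1(j)=j\bmod (m+1)$ on all of $T_1$. This uses only $m+1$ of the available colors and is a proper $S$-coloration by the hypothesis $S\cap(m+1)\Z=\varnothing$; the identification $c_1(p+i)=c_1(i)$ is automatic since $p\equiv 0\pmod{m+1}$. The essential work lies in constructing $c_2$: because $q\not\equiv 0\pmod{m+1}$, the naive modular coloring fails to close up at the endpoint identification and must be repaired using the extra $(m+2)$-nd color.

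My construction partitions $T_2$ (identified with $[0,\ell+q-1]$) into a left modular segment $[0,j^*]$ with $c_2(j)=j\bmod(m+1)$, a defect segment $[j^*+1,j^*+K]$ of length $K\geq a_n$, and a right shifted-modular segment $[j^*+K+1,\ell+q-1]$ with $c_2(j)=(j-1)\bmod (m+1)$, where $j^*$ is chosen well inside $T_2$ so that both the first $\ell$ and last $\ell$ positions sit in the unmodified modular pieces. The shift by $-1$ on the right is compensatory: using $q\equiv 1\pmod{m+1}$, it gives $c_2(q+i)=i\bmod(m+1)=d(i)$, so the last-$\ell$ identification holds. The condition $K\geq a_n$ guarantees that no pair of left-segment and right-segment positions lies at $S$-distance, so the two modular pieces impose no direct constraints on each other across the defect.

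The main obstacle will be coloring the defect segment properly. At each defect position $j^*+i$, the constraints coming from adjacent left- and right-bulk positions together forbid at most $m+1$ colors, all drawn from $\{0,\dots,m\}$, so the extra color $m+1$ is always permitted from the bulk's perspective; the remaining task is to avoid simultaneous use of the extra color at two defect positions at $S$-distance. With $K$ taken suitably---possibly somewhat larger than $a_n$---and the extra color placed only at positions whose bulk constraints forbid all $m+1$ modular values, and those positions spaced more than $a_n$ apart, one obtains a valid $(m+2)$-coloring of the defect by a direct local argument. The hardest case is when the reduction $\bar S\subseteq \Z/(m+1)\Z$ covers every nonzero residue, for then the phase mismatch forced by $q\equiv 1\pmod{m+1}$ cannot be absorbed by any pure modular shift and the extra color is genuinely indispensable---this is precisely the situation (exemplified by $S=\{\pm 1\}$ with $m=1$) in which the bound $m+2$ is tight.
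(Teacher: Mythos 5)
Your framework (Lemma~\ref{lem:basic}(b), $c_1$ the pure modular colouring, $c_2$ a phase-shifted modular colouring repaired across a seam) is the same as the paper's, but the entire content of the theorem is the repair of $c_2$ across the phase defect, and that is precisely the step you have not supplied; moreover the specific strategy you describe fails. Take $m=2$ and $S=\{\pm 1,\pm 2,\pm 4,\pm 5\}$, so $S\cap 3\Z=\varnothing$ and $a_n=5$. With a defect segment $[j^*+1,j^*+K]$ of length $K=a_n$, each of the positions $j^*+2$ and $j^*+3$ has bulk neighbours realizing all three forbidden residues, so both are forced to take the extra colour; but they are at distance $1\in S$. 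This already refutes the claim that the forced positions are spaced more than $a_n$ apart. Taking $K=7$ leaves only $j^*+4$ forced, but a short chain of forced deductions (the neighbours of $j^*+4$ and of the left/right bulks successively lose all but one colour) leaves $j^*+2$ with no legal colour at all. Increasing $K$ further eliminates every bulk-forced position and simply reproduces the original interpolation problem inside the defect, so ``extra colour only where the bulk forbids all modular values'' cannot be the right rule, and no local argument in a window of size $O(a_n)$ is available. You have reduced the theorem to a claim that is at least as hard as the theorem itself, and false in the form stated.

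The paper's construction is global rather than local: it deletes a single symbol from $(01\cdots m)^N$ to create the phase shift, and then repairs the colouring by a cascade of $m+1$ rounds --- replace the first $b_1$ occurrences of colour $1$ after the seam by the new colour $m+1$, then the first $b_2$ occurrences of $2$ by $1$, \dots, and finally the first $b_0$ occurrences of $0$ by $m$ --- with $a_n\ll b_0\ll b_m\ll\cdots\ll b_1\ll N$. The point is that afterwards each colour class is a union of at most two arithmetic progressions of common difference $m+1$ separated by a gap longer than $\ell>a_n$, so properness follows directly from $S\cap(m+1)\Z=\varnothing$. In particular the extra colour is used at $b_1\gg a_n$ many positions that are only $m+1$ apart (not more than $a_n$ apart), and the modified region has length of order $b_1(m+1)\gg a_n$. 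To complete your proof you would need to replace the ``direct local argument'' by a construction of this kind.
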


\begin{proof} Let $\ell\geq a_n+1$ be a multiple of $m+1$, say $\ell=M(m+1)$. Let $c_0$ be the word $(01\dots m)^M$. Let $b_0, b_1, \dots, b_m, N$ be sufficiently large integers so that
$$ a_n\ll b_0 \ll b_m \ll b_{m-1} \ll \cdots \ll b_2 \ll b_1 \ll N. $$
Also ensure $N> 2M+1$. Let $p=N(m+1)$ and $q=N(m+1)-1$. Then $p, q>\ell$ and $\gcd(p, q)=1$.

Now we construct $S$-colorations $c_1$ of $T_1$ and $c_2$ of $T_2$ as words in $m+2$. First let $c_1=(01\dots m)^N$. Since $S\cap (m+1)\Z=\varnothing$, $c_1$ is an $S$-coloration with $m+1$ colors. Also note that $c_1$ begins and ends with the word $c_0$. By Lemma~\ref{lem:basic} (b), all it remains is to define an $S$-coloration $c_2$ with $m+2$ colors which is of length $\ell+q$ so that $c_2$ begins and ends with $c_0$.

$c_2$ will be obtained from the following procedure with $m+2$ steps.

Step 0: Let $d_0$ be the word $c_0$ followed by the word $1\dots m$, and then followed by the word $(01\dots m)^{N-M-1}$. In other words, $d_0$ can be obtained from $c_1$ by removing the $0$ in its $|c_0|=M(m+1)$ coordinate. Let $u_0$ be the word following $c_0$ in $d_0$, that is, we write $d_0=c_0u_0$.

Step 1: $d_1$ is obtained from $d_0$ by replacing the first $b_1$ many occurrences of $1$ in $u_0$ by $m+1$. Write $d_1=c_0u_1$. 

Step $i$, $2\leq i\leq m$: Suppose the word $d_{i-1}$ was defined in the preceding step, where $d_{i-1}=c_0u_{i-1}$. Then $d_i$ is obtained from $d_{i-1}$ by replacing the first $b_i$ many occurrences of $i$ in $u_{i-1}$ by $i-1$.

Step $m+1$: Note that $d_m=c_0u_m$. $d_{m+1}$ is obtained by replacing the first $b_0$ many occurrences of $0$ in $u_m$ by $m$. We write $d_{m+1}=c_0u_{m+1}$.

Finally, let $c_2=d_{m+1}$. We verify that $c_2$ is an $S$-coloration with $m+2$ colors. First, by our construction, the color $m+1$ occurs in $c_2$ at the same positions as it occurs in $d_1$, and therefore they occur with period $m+1$. Since $S\cap (m+1)\Z=\varnothing$, there are no $x, y\in T_2$ with $c_2(x)=c_2(y)=m+1$ and $x-y\in S$. Next, note that $1$ does not occur in the first $b_1(m+1)$ many positions of $u_1$. Therefore, as long as $b_1-b_2>M$, the occurrences of $1$ in $d_2$ take place in two blocks which are more than $M(m+1)=\ell$ apart, and in each block it occurs periodically with period $m+1$. Also, $1$ occurs in $c_2$ at the same positions as it occurs in $d_2$. Thus, there are no $x, y\in T_2$ with $c_2(x)=c_2(y)=1$ and $x-y\in S$. The cases of the colors $2,\dots, m$ are similar. Finally, the color $0$ does not occur in the first $b_0(m+1)$ many positions of $u_{m+1}$. Thus, as long as $b_0>M$, the occurrences of $0$ in $d_{m+1}=c_2$ take place in two blocks that are more than $M(m+1)=\ell$ apart, and in each block it occurs periodically with period $m+1$. Thus there are no $x, y\in T_2$ with $c_2(x)=c_2(y)=0$ and $x-y\in S$. 

It is clear from our construction that $c_2$ begins with the word $c_0$. As long as $N-b_1>M$, we have that $c_2$ ends with $c_0$. Thus $c_2$ is as required. 
\end{proof}

\begin{corollary}\label{cor:odd} If $(a_1, \dots, a_n)\in \Sigma$ and all of $a_1, \dots, a_n$ are odd numbers, then $\chi(a_1,\dots, a_n)=3$.
\end{corollary}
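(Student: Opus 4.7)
The plan is to observe that this corollary is just the specialization of Theorem~\ref{thm:cong} at $m=1$, combined with the universal lower bound from Theorem~\ref{thm:lower}. No new construction is needed.

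For the lower bound, I would note that the core subgraph $K_S$ contains at least the edge between $0$ and $a_1$, since $a_1 - 0 = a_1 \in S$. Hence $\lambda_S \geq 2$, and Theorem~\ref{thm:lower} immediately gives
\[
\chi(a_1,\dots,a_n) \;\geq\; \lambda_S + 1 \;\geq\; 3.
\]

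For the upper bound, I would apply Theorem~\ref{thm:cong} with $m = 1$. The hypothesis to verify is $S \cap 2\Z = \varnothing$: since each $a_i$ is odd, each $\pm a_i$ is odd as well, so no element of $S = \{\pm a_1,\dots,\pm a_n\}$ is divisible by $2$. Theorem~\ref{thm:cong} then yields $\chi(a_1,\dots,a_n) \leq m + 2 = 3$.

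Combining the two estimates gives $\chi(a_1,\dots,a_n) = 3$, completing the proof. The only thing one might want to double-check is that Theorem~\ref{thm:cong} is genuinely applicable in the degenerate case $n = 1$ (where necessarily $a_1 = 1$): here $S = \{\pm 1\}$ clearly misses $2\Z$, and the bound $\chi \leq 3$ combined with $\chi \geq 3$ is consistent with the classical Borel chromatic number of the standard successor graph on $F(2^{\Z})$. Thus there is no real obstacle in this corollary.
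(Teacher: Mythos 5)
Your proposal is correct and matches the paper's proof exactly: the paper likewise applies Theorem~\ref{thm:cong} with $m=1$ (using that all elements of $S$ are odd, so $S\cap 2\Z=\varnothing$) for the upper bound and invokes Theorem~\ref{thm:lower} for the lower bound. Your extra remark about the $n=1$ case is harmless but unnecessary.
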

\begin{proof} In the above theorem, set $m=1$; we get $\chi(a_1,\dots, a_n)\leq 3$. By Theorem~\ref{thm:lower}, $\chi(a_1,\dots, a_n)=3$.
\end{proof}

\begin{corollary}\label{cor:1ton} For any $n\geq 1$, $\chi(1, 2,\dots, n)=n+2$.
\end{corollary}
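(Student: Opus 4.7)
The plan is to squeeze $\chi(1,2,\dots,n)$ between matching bounds coming from the two general results just established: Theorem~\ref{thm:lower} for the lower bound and Theorem~\ref{thm:cong} for the upper bound. So the whole argument reduces to checking two concrete combinatorial facts about $S=\{\pm 1,\dots,\pm n\}$.

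For the lower bound, I would inspect the core subgraph $K_S$, which is induced on the vertex set $\{0,1,\dots,n\}$. For any two distinct $i,j$ in this set we have $|i-j|\in\{1,\dots,n\}$, so $i-j\in S$; hence $K_S$ is the complete graph on $n+1$ vertices. In particular $\kappa_S=n+1$, and Theorem~\ref{thm:lower} gives
\[
\chi(1,2,\dots,n)\geq \kappa_S+1=n+2.
\]

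For the upper bound, I would apply Theorem~\ref{thm:cong} with $m=n$. The hypothesis $S\cap(m+1)\Z=S\cap(n+1)\Z=\varnothing$ is immediate, since every nonzero $s\in S$ satisfies $|s|\leq n<n+1$ and therefore cannot be a nonzero multiple of $n+1$. The theorem then yields
\[
\chi(1,2,\dots,n)\leq m+2=n+2.
\]
Combining the two inequalities gives the desired equality. The only thing to verify carefully is the matching of constants ($\kappa_S=n+1$ on the one hand and $m+2=n+2$ on the other), which is why this case is the extremal one and why neither bound is wasteful; there is no substantive obstacle beyond these bookkeeping checks.
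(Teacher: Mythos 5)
Your proposal is correct and follows essentially the same route as the paper: the lower bound $\chi\geq n+2$ comes from Theorem~\ref{thm:lower} applied to the core subgraph $K_S$ (the paper cites $\lambda_S\geq n+1$, you compute $\kappa_S=n+1$; since $K_S$ is complete these coincide), and the upper bound comes from Theorem~\ref{thm:cong} with $m=n$ via $S\cap(n+1)\Z=\varnothing$. No issues.
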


\begin{proof} Let $S=\{\pm a_1, \dots, \pm a_n\}$. Then $\lambda_S\geq n+1$. Now the lower bound is given by Theorem~\ref{thm:lower}. The upper bound is by Theorem~\ref{thm:cong} since $\{1, \dots, n\}\cap (n+1)\Z=\varnothing$.
\end{proof}

\begin{corollary} For any integers $n\geq 1$ and $3\leq k\leq n+2$, there is $(a_1,\dots, a_n)\in \Sigma$ such that $\chi(a_1,\dots, a_n)=k$.
\end{corollary}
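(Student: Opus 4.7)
The plan is to construct explicit generating sets realizing each value $k$ with $3 \le k \le n+2$, playing the clique lower bound of Theorem~\ref{thm:lower} off against the divisibility upper bound of Theorem~\ref{thm:cong}. The two extreme cases are already handled by results in the preceding pages: for $k = n+2$ the sequence $(1,2,\dots,n)$ works by Corollary~\ref{cor:1ton}, and for $k = 3$ any strictly increasing sequence of $n$ odd numbers containing $1$, for instance $(1,3,5,\dots,2n-1)$, works by Corollary~\ref{cor:odd} together with the universal lower bound $\chi\geq 3$ from Theorem~\ref{thm:lower}. So the real content is the intermediate range $4 \le k \le n+1$.

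For an intermediate $k$, I will take $a_i=i$ for $1\le i\le k-2$ and then adjoin $n-k+2$ further integers $a_{k-1}<a_k<\cdots<a_n$, each strictly larger than $k-2$ and each coprime to---in fact, simply not divisible by---$k-1$. The set of positive non-multiples of $k-1$ has positive density, so such a selection is always possible. Three verifications remain: (i) $\gcd(a_1,\dots,a_n)=1$, which is automatic from $a_1=1$; (ii) the lower bound $\chi(a_1,\dots,a_n)\ge k$, which follows because $\{0,1,\dots,k-2\}$ is a clique of size $k-1$ in the core subgraph $K_S$, since any two of its elements differ by a member of $\{1,\dots,k-2\}\subseteq S$, so $\lambda_S\ge k-1$ and Theorem~\ref{thm:lower} gives $\chi\ge\lambda_S+1\ge k$; (iii) the upper bound $\chi(a_1,\dots,a_n)\le k$, which follows from Theorem~\ref{thm:cong} applied with $m=k-2$, since by construction $a_1,\dots,a_{k-2}$ are all less than $k-1$ and $a_{k-1},\dots,a_n$ were explicitly chosen to avoid $(k-1)\Z$, so $S\cap (k-1)\Z=\varnothing$.

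There is no genuine obstacle in this argument: everything reduces to the two prior theorems once the generating set is chosen so that it simultaneously contains the arithmetic progression $1,2,\dots,k-2$ (to force a clique of the right size) and avoids the progression $(k-1)\Z$ (to license the coloring upper bound). The only point requiring any care is the compatibility of these two requirements, and that is settled by the trivial density observation that infinitely many integers past $k-2$ are not divisible by $k-1$, so the required tail $a_{k-1}<\cdots<a_n$ always exists.
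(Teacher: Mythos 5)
Your proposal is correct and follows essentially the same route as the paper: choose $a_i=i$ for $i\leq k-2$ and fill out the rest of the sequence with integers avoiding $(k-1)\Z$, then combine the clique lower bound of Theorem~\ref{thm:lower} with the upper bound of Theorem~\ref{thm:cong} applied with $m=k-2$. The paper simply takes the first $n$ positive non-multiples of $k-1$ in one stroke, which also subsumes your separately treated extreme cases $k=3$ and $k=n+2$.
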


\begin{proof} Let $a_1, \dots, a_n$ enumerate the first $n$ postive integers in $\Z\setminus (k-1)\Z$. In particular, $a_i=i$ for $1\leq i\leq k-2$. Let $S=\{\pm a_1, \dots, \pm a_n\}$. Then $K_S$ contains a clique of size $k-1$. By Theorem~\ref{thm:lower}, $\chi(a_1,\dots, a_n)\geq k$. On the other hand, $S\cap (k-1)\Z=\varnothing$. By Theorem~\ref{thm:cong}, $\chi(a_1,\dots, a_n)\leq k$.
\end{proof}

The following is another special case. 

\begin{theorem}\label{thm:2a1} If $(a_1, \dots, a_n)\in \Sigma$ and $a_n<2a_1$, then $\chi(a_1,\dots, a_n)=3$.
\end{theorem}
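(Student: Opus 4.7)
The lower bound $\chi(a_1,\dots,a_n) \ge 3$ is immediate from Theorem~\ref{thm:lower}, so the plan is to prove the matching upper bound $\chi(a_1,\dots,a_n) \le 3$ via Lemma~\ref{lem:basic}(b), using a single periodic $3$-coloring of $\Z$ to build both $S$-colorations simultaneously.

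First, I would define $c \colon \Z \to \{0,1,2\}$ by $c(j) = \lfloor j/a_1 \rfloor \bmod 3$; this is the $3a_1$-periodic pattern that repeats the block $0^{a_1} 1^{a_1} 2^{a_1}$. The key verification is that $c$ is an $S$-coloration under the hypothesis $a_n < 2a_1$. Given $x, y \in \Z$ with $|x-y| \in \{a_1,\dots,a_n\}$, one has $|x-y| \in [a_1, 2a_1 - 1]$, and hence $|\lfloor x/a_1\rfloor - \lfloor y/a_1\rfloor| \in \{1, 2\}$: the difference is at least $1$ since $|x-y| \ge a_1$, and strictly less than $3$ since $|x-y| < 3a_1$. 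In either case it is nonzero modulo $3$, so $c(x) \ne c(y)$.

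To invoke Lemma~\ref{lem:basic}(b), I would set $\ell = a_n + 1$, choose $q > \ell$ with $\gcd(q, 3a_1) = 1$ (which exists since there are infinitely many such integers), and put $p = q + 3a_1$. Then $p > q > \ell$ and $\gcd(p, q) = \gcd(q, 3a_1) = 1$. Identifying $T_1(\ell, p, q)$ and $T_2(\ell, p, q)$ with the intervals $[0, \ell+p)$ and $[0, \ell+q)$ in $\Z$, let $c_1$ and $c_2$ be the restrictions of $c$ to these intervals; as restrictions of an $S$-coloration they remain $S$-colorations. They agree on the first $\ell$ vertices since both equal $c\upharpoonright [0,\ell)$, and since $p - q = 3a_1$ is a period of $c$, for $0 \le i < \ell$ we get $c_1(p+i) = c(p+i) = c(q+i) = c_2(q+i)$, so they also agree on the last $\ell$ vertices. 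Lemma~\ref{lem:basic}(b) then gives $\chi(a_1,\dots,a_n) \le 3$. There is no real obstacle here: the only subtlety is arranging $\gcd(p, q) = 1$ while simultaneously forcing $p \equiv q \pmod{3a_1}$, which is handled by taking $p - q$ to be exactly one period and choosing $q$ coprime to $3a_1$.
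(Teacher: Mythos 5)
Your periodic word $c(j)=\lfloor j/a_1\rfloor\bmod 3$ is indeed an $S$-coloration when $a_n<2a_1$ (the same verification appears in the paper), and the lower bound via Theorem~\ref{thm:lower} is fine. The gap is in how you apply Lemma~\ref{lem:basic}(b). In the two-tiles graph $\Gamma_{\ell,p,q}$, \emph{all four} distinguished subpaths --- the first $\ell$ and the last $\ell$ vertices of $T_1$, and the first $\ell$ and the last $\ell$ vertices of $T_2$ --- are identified with one another, so the agreement condition in Lemma~\ref{lem:basic}(b) requires the four restrictions $c_1\upharpoonright T_1^{-}$, $c_1\upharpoonright T_1^{+}$, $c_2\upharpoonright T_2^{-}$, $c_2\upharpoonright T_2^{+}$ to be one and the same word of length $\ell$; this is exactly how the condition is used elsewhere in the paper, where $c_1$ and $c_2$ must each \emph{begin and end} with a common block $c_0$. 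You only check $c_1\upharpoonright T_1^{-}=c_2\upharpoonright T_2^{-}$ and $c_1\upharpoonright T_1^{+}=c_2\upharpoonright T_2^{+}$, not $c_1\upharpoonright T_1^{-}=c_1\upharpoonright T_1^{+}$. For your $c_1=c\upharpoonright[0,\ell+p)$ the missing identity reads $c(i)=c(p+i)$ for all $0\leq i<\ell$; since $\ell>a_1$, matching the block boundaries forces $a_1\mid p$ and then $\lfloor p/a_1\rfloor\equiv 0\pmod 3$, i.e.\ $3a_1\mid p$. But you arranged $\gcd(p,3a_1)=\gcd(q,3a_1)=1$, so this fails (already for $a_1=1$ you would need $3\mid p$). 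Thus your pair $(c_1,c_2)$ does not descend to a map on $\Gamma_{\ell,p,q}$.

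This cannot be repaired within your scheme: if both tiles are literal restrictions of the single $3a_1$-periodic word and both must begin and end with the same length-$\ell$ block, then both $p$ and $q$ must be multiples of the period $3a_1\geq 3$, contradicting $\gcd(p,q)=1$. This is precisely the obstruction the paper's proof is built around: it takes $p=3a_1N$, so that the periodic word closes up on $T_1$, and $q=3a_1N-1$, and then produces $c_2$ by \emph{deleting one symbol} from the periodic word at coordinate $\ell$, afterwards re-verifying (using $a_n<2a_1$ again) that the perturbed word is still an $S$-coloration and still begins and ends with $c_0$. Some such local surgery on one of the two tiles is unavoidable, and it is the step your proposal is missing.
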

\begin{proof} Let $S=\{\pm a_1, \dots, \pm a_n\}$. Let $\ell\geq a_n+1$ be a multiple of $3a_1$, say $\ell=3a_1M$. Let $N>2M+1$, $p=3a_1N$ and $q=3a_1N-1$. Then $p, q>\ell$ and $\gcd(p, q)=1$. Let $c_0=(0^{a_1}1^{a_1}2^{a_1})^M$. Let $c_1=(0^{a_1}1^{a_1}2^{a_1})^N$. Then $c_1$ starts and ends with $c_0$. To see that $c_1$ is an $S$-coloration with $3$ colors, suppose $x, y\in T_1$ with $x-y\in S$. Since $a_n<2a_1$, $x$ and $y$ are indices of a subword of the form $\alpha^{a_1}\beta^{a_1}\gamma^{a_1}$ for distinct colors $\alpha, \beta, \gamma$. Since $|x-y|\geq a_1$, we must have $c(x)\neq c(y)$. 

Let $c_2$ be obtained from $c_1$ with the $0$ in the $\ell$ coordinate removed. Since $N>2M+1$, $c_2$ still starts and ends with $c_0$. By a similar argument as above, $c_2$ is an $S$-coloration with $3$ colors.

Thus by Lemma~\ref{lem:basic} (b) we have shown $\chi(a_1, \dots, a_n)\leq 3$. By Theorem~\ref{thm:lower}, $\chi(a_1,\dots, a_n)=3$.
\end{proof}

\section{Some computations of Borel chromatic numbers}\label{sec:6}
In this section we give a formula for Borel chromatic numbers in the special case of generating pairs.

\begin{theorem}\label{thm:pair} If $(a_1, a_2)\in \Sigma$, then
$$ \chi(a_1, a_2)=\left\{\begin{array}{ll}4, & \mbox{if $(a_1, a_2)=(1,2)$,} \\3, & \mbox{otherwise.}\end{array}\right. $$
\end{theorem}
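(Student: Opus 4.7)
The case $(a_1, a_2) = (1, 2)$ is immediate from Corollary~\ref{cor:1ton} with $n = 2$, giving $\chi(1, 2) = 4$. For the remaining case $(a_1, a_2) \in \Sigma$ with $(a_1, a_2) \neq (1, 2)$, I plan to show $\chi(a_1, a_2) = 3$. The lower bound $\chi(a_1, a_2) \geq 3$ is immediate from Theorem~\ref{thm:lower}, since the edge $\{0, a_1\}$ lies in the core subgraph $K_S$, giving $\lambda_S \geq 2$.

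For the upper bound $\chi(a_1, a_2) \leq 3$, I would proceed by cases. If $a_1$ and $a_2$ are both odd, Corollary~\ref{cor:odd} gives the result. If $a_2 < 2 a_1$, Theorem~\ref{thm:2a1} gives it. The remaining case is when $a_1, a_2$ have opposite parities (forced by $\gcd(a_1, a_2) = 1$ together with ``not both odd'') and $a_2 \geq 2 a_1$; since $a_2 = 2 a_1$ combined with $\gcd(a_1, a_2) = 1$ would force $(a_1, a_2) = (1, 2)$, we in fact have $a_2 > 2 a_1$ here.

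In this remaining case, I would invoke Lemma~\ref{lem:basic} (b) and construct explicit $S$-colorations $c_1, c_2$ with $3$ colors. The key observation is that $d := a_1 + a_2$ is odd (since $a_1, a_2$ have opposite parities), and $\pm a_2 \equiv \mp a_1 \pmod{d}$, so the Cayley graph of $\Z/d\Z$ with connection set $\{\pm a_1, \pm a_2\}$ coincides with the one with connection set $\{\pm a_1\}$, which is a cycle of length $d$ (since $\gcd(a_1, d) = \gcd(a_1, a_2) = 1$). Being an odd cycle, it admits a proper $3$-coloring $\sigma : \Z/d\Z \to \{0, 1, 2\}$, which lifts to a period-$d$ valid $S$-coloring $c : \Z \to \{0, 1, 2\}$. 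Choosing $p$ to be a suitable multiple of $d$ (with $p > \ell$ for $\ell \geq a_2 + 1$) and taking $c_1$ to be the length-$(\ell + p)$ restriction of $c$ handles one half of Lemma~\ref{lem:basic} (b).

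The main obstacle is constructing $c_2$ of length $\ell + q$ with $\gcd(p, q) = 1$ (and thus $d \nmid q$) that is a valid $S$-coloration matching $c_1$ on the endpoint subgraphs. Since $c_2$ cannot be $d$-periodic, it must incorporate a defect that accounts for the length difference without violating the constraints imposed by $a_1$ and $a_2$. A naive single-character insertion or a pure phase shift typically fails: the constraints coming from $a_1$ and the constraints coming from $a_2$ interact across any short defect region in ways that cannot simultaneously be repaired by a one-position perturbation. The plan is to use a longer, carefully designed defect block (possibly mimicking a different periodic pattern on a short interval) and to exploit the freedom in the choice of $\sigma$ on the odd cycle; completing this construction, likely via sub-case analysis on residues of $a_1$ and $a_2$ modulo small integers, will be the most delicate part of the proof.
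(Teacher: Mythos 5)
Your reduction is fine as far as it goes: the $(1,2)$ case via Corollary~\ref{cor:1ton}, the lower bound via Theorem~\ref{thm:lower}, the dispatch of the both-odd and $a_2<2a_1$ cases to Corollary~\ref{cor:odd} and Theorem~\ref{thm:2a1}, and the observation that in the remaining case $d=a_1+a_2$ is odd, $S$ reduces mod $d$ to $\{\pm a_1\}$, and a proper $3$-coloring of the odd $d$-cycle lifts to a $d$-periodic $S$-coloration giving $c_1$. (The paper's $c_0$ in its Cases 2 and 3 is exactly such a period-$(a_1+a_2)$ word, so up to this point you are on the paper's track.) But the proof stops exactly where the theorem's actual content begins: you never construct $c_2$. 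The existence of a periodic proper $3$-coloring of $C_S$ is \emph{never} sufficient on its own --- for $(a_1,a_2)=(1,2)$ the word $(012)^\infty$ is a perfectly good periodic $S$-coloration with $3$ colors, and yet $\chi(1,2)=4$. The whole force of Lemma~\ref{lem:basic}~(b) is the requirement of a second coloration on a path of coprime length agreeing with the first on both end blocks of length $\ell$, and your paragraph about ``a longer, carefully designed defect block'' and ``sub-case analysis on residues'' is a statement of the problem, not a solution. As written, the upper bound $\chi(a_1,a_2)\leq 3$ is not established in the remaining case.

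For comparison, this missing step is where essentially all of the paper's work lies. The paper splits into three subcases ($a_1=1$ with $a_2$ even; $a_1>1$ with $ma_1<a_2<(m+1)a_1$ for $m$ odd; same for $m$ even), takes $p=3\ell$ with $\ell=a_1+a_2$ (or $\ell=a_2+1$ when $a_1=1$), and builds $c_2=c_0\,\tilde{c}_0\,c_0$ where $\tilde{c}_0$ is obtained from $c_0$ by deleting one symbol and then repairing the induced conflicts by substituting modified blocks ($\tilde{v}=1^{a_1-2}22$ for $v^*=1^{a_1-2}20$, $\tilde{u}=20^{a_1-1}$, etc.), verifying blockwise that every pair of positions differing by $a_1$ or $a_2$ receives distinct colors. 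If you want to complete your version, you would need to supply an argument of comparable specificity; the choice of $\sigma$ on the odd cycle cannot be arbitrary, since the defect must be repairable, which is precisely why the paper fixes a particular block structure for $c_0$ rather than an abstract $3$-coloring.
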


\begin{proof} Let $S=\{\pm a_1, \pm a_2\}$. The fact $\chi(1,2)=4$ is by Corollary~\ref{cor:1ton}. We show that whenever $(a_1, a_2)\neq (1,2)$, we have $\chi(a_1, a_2)=3$. By Corollary~\ref{cor:odd}, we have $\chi(1, a_2)=3$ if $a_2$ is odd. Next we deal with the case $a_1=1$ and $a_2$ is even.

Case 1: $a_1=1$ and $a_2$ is even. 

Let $\ell=a_2+1$ and $t=a_2/2-1$. Let $c_0=(01)^t021$. Then $c_0$ has length $\ell$. Let $p=3\ell$ and $q=3\ell-2$. Then $p, q>\ell$ and $\gcd(p,q)=1$. Let $c_1=c_0^3$. By straightforward observation, $c_1$ is an $S$-coloration. To make this observation easier, we list the three copies of $c_0$ in three rows, where in each column, the adjacent entries correspond to positions which differ by $a_2=\ell-1$.
$$ \begin{array}{rccccccccccccl}
0 & 1 & 0 & 1 & \cdots & 0 & 1 & 0 & 1 & 0 & 2 & 1 &  & \\
   & 0 & 1 & 0 & \cdots &    & 0 & 1 & 0 & 1 & 0 & 2 & 1 & \\
   &    & 0 & 1 & \cdots &    &    & 0 & 1 & 0 & 1 & 0 & 2 & 1
   \end{array}
   $$
Let $c_2$ be obtained from $c_1$ by removing the subword $01$ in its $\ell+1$ and $\ell+2$ coordinates. Then $c_2$ is still an $S$-coloration by a straightforward observation of the following diagram.
$$ \begin{array}{rccccccccccccl}
0 & 1 & 0 & 1 & \cdots & 0 & 1 & 0 & 1 & 0 & 2 & 1 &  & \\
   & 0 & 1 & 0 & \cdots &    & 0 & 1 & 0 & 2 & 1 &  &  & \\
0   &  1  & 0 & 1 & \cdots &    &    & 0 & 1 & 0 & 2 & 1 &  &
   \end{array}
   $$

For the rest of the proof, we assume $a_1>1$. By Theorem~\ref{thm:2a1}, we may assume $a_2\geq 2a_1$. Since $(a_1, a_2)\in\Sigma$, $\gcd(a_1, a_2)=1$. Thus we can finish the proof by considering the following two cases. 

Case 2: For an odd number $m>1$, $ma_1<a_2<(m+1)a_1$.

In this part of the proof we redefine the values of $\ell, t, p, q, c_0, c_1$ and $c_2$. Let $b=a_2-ma_1$. Then $1\leq b<a_1$. Let $\ell=a_1+a_2=(m+1)a_1+b$ and $t=(m-1)/2\geq 1$. As before, we first define an $S$-coloration $c_0$ of length $\ell$ as follows. Let $u=0^{a_1}$, $v=1^{a_1-1}2$, $w=2^{a_1}$, and $s=1^b$. Then let
$$ c_0=(uv)^t usw. $$
It is easily seen that $u,v,w$ are of length $a_1$ and $c_0$ is then of length $2a_1(t+1)+b=\ell$. Also note that 
\begin{itemize}
\item $u$ and $v$ differ in every position,
\item $u$ and $w$ differ in every position, and
\item $sw$ and $us$ both have length $a_1+b$ and they differ in every position. 
\end{itemize}
These properties guarantee that $c_0$ is an $S$-coloration. Let $p=3\ell$ and $q=p-1$. Then $p, q>\ell$ and $\gcd(p, q)=1$. Let $c_1=c_0^3$. Then we may again list $c_1$ in three rows where the adjacent entries in a column correspond to positions which differ by $a_2=ma_1+b$, and observe that $c_1$ is an $S$-coloration. In the following diagram, we use square brackets to group $sw$ together in the first row and $us$ together in the second row, and use parentheses to group $sw$ together in the second row and $us$ together in the third row to show their correspondence in position.
$$  \begin{array}{rccccccccccccl}
u & v & u & v & \cdots & u & v & u & v & u & [s & w] &  & \\
   & u & v & u & \cdots &    & u & v & u & v & [u & (s] & w) & \\
   &    & u & v & \cdots &    &    & u & v & u & v & (u & s) & w
   \end{array}
   $$

Before we define $c_2$ we consider the word $c_0^*$, which is obtained from $c_0$ by removing its first entry (the $0$ in the $0$ coordinate). Then we may write $c_0^*$ as
$$ c_0^*=(u^*v^*)^t u^*s^*w^-, $$
where $u^*=0^{a_1-1}1$, $v^*=1^{a_1-2}20$, $s^*=1^{b-1}2$ and $w^-=2^{a_1-1}$. Now define $\tilde{v}=1^{a_1-2}2^2=1^{a_1-2}22$, and let
$$ \tilde{c}_0=(u^*\tilde{v})^tu^*sw^-. $$
Then note that 
\begin{itemize}
\item $u^*$ and $\tilde{v}$ are both of length $a_1$ and they differ in every position, 
\item $w^-$ is of length $a_1-1$ and $u^*$ and $w^-$ differ in every one of the first $a_1-1$ positions, 
\item $sw$ and $u^*s$ are both of length $a_1+b$ and they differ in every position, and
\item $sw^-$ has length $a_1+b-1$ and $sw^-$ and $u^*s$ differ in every one of the first $a_1+b-1$ positions.
\end{itemize}
It follows that $\tilde{c_0}$ is an $S$-coloration. Moreover, we now have that 
\begin{itemize}
\item $v$ and $u^*$ both have length $a_1$ and they differ in every position, and
\item $u$ and $\tilde{v}$ both have length $a_1$ and they differ in every position.  
\end{itemize}
It follows that $c_0\tilde{c_0}$ is an $S$-coloration by the following diagram.
$$  \begin{array}{rccccccccccccl}
u & v & u & v & \cdots & u & v & u & v & u & [s & w] &  & \\
   & u^* & \tilde{v} & u^* & \cdots &    & u^* & \tilde{v} & u^* & \tilde{v} & [u^* & s] & w^- & 
      \end{array}
   $$
Now we may rewrite $\tilde{c}_0$ as
$$ \tilde{c}_0=u^{-}v(\tilde{u}v)^{t-1}\tilde{u}s\tilde{w}, $$
where $u^{-}=0^{a_1-1}$, $\tilde{u}=20^{a_1-1}$ and $\tilde{w}=12^{a_1-1}$. Note that
\begin{itemize}
\item $\tilde{u}$ and $v$ both have length $a_1$ and they differ in every position, and
\item $s\tilde{w}$ and $us$ both have length $a_1+b$ and they differ in every position.
\end{itemize}
It follows that $\tilde{c}_0c_0$ is also an $S$-coloration by the following diagram.
$$  \begin{array}{rccccccccccccl}
    u^{-} & v& \tilde{u} & \cdots &    & \tilde{u} &v & \tilde{u} & v & \tilde{u} & [s & \tilde{w}] & \\  
& u & v & \cdots &  & & u & v & u & v & [u & s] & w &       \end{array}
   $$
Finally, define $c_2=c_0\tilde{c}_0c_0$. Then $c_2$ is an $S$-coloration of length $q$. This finishes the proof for Case 2.

Case 3: For an even number $m>1$, $ma_1<a_2<(m+1)a_1$. 

In this part of the proof we again redefine the values of $\ell, t, p, q, c_0, c_1$ and $c_2$. But we will keep the definitions of $b, u, v, s, u^*, v^*, s^*, \tilde{v}, \tilde{u}$ and $u^{-}$. For the convenience of the reader, we summarize their definitions in the following.
$$\begin{array}{llll}
u=0^{a_1} & u^*=0^{a_1-1}1 & \tilde{u}=20^{a_1-1} & u^-=0^{a_1-1}\\
v=1^{a_1-1}2 & v^*=1^{a_1-2}20 & \tilde{v}=1^{a_1-2}22 & \\
s=1^b & s^*=1^{b-1}2 & & 
\end{array}
$$
We let $b=a_2-ma_1$ and $d=a_1-b$. Let $t=m/2-1$ and $\ell=a_1+a_2$. Note that
$$\ell=2a_1t+a_1+b+d+b+a_1.$$
Let $p=3\ell$ and $q=3\ell-1$. Then $p, q>\ell$ and $\gcd(p,q)=1$. 

We add the following definitions.
$$ \begin{array}{llll}
x=2^d & x^*=2^{d-1}0 & & \\
y=0^b & y^*=0^{b-1}1 &  & \\
z=1^{a_1} & & \tilde{z}=01^{a_1-1} & z^-=1^{a_1-1}
\end{array}
$$ 
Then let
$$ c_0=(uv)^tusxyz $$
and $c_1=c_0^3$. Then $c_0$ has length $\ell$ and $c_1$ has length $3\ell$. Now note that
\begin{itemize}
\item $u$ and $z$ both have length $a_1$ and they differ in every position,
\item $s$ and $y$ both have length $b$  and they differ in every position,
\item $sx$ and $u$ both have length $a_1$ and they differ in every position, and
\item $xy$ and $z$ both have length $a_1$ and they differ in every position.
\end{itemize}
It follows that both $c_0$ and $c_1$ are $S$-colorations by the following diagram.
$$  \begin{array}{rcccccccccccccl}
    u & v & u & v & \cdots & u & v & u & v & u & sx &  y & z & \\
    & u & v & u & \cdots & & u& v& u & v& u& s & xy & z     \end{array}
   $$
Now let $c_0^*$ be obtained from $c_0$ by removing its first entry. Then we can rewrite $c_0^*$ as
$$ c_0^*=(u^*v^*)^tu^*s^*x^*y^*z^{-}.$$
Now let
$$ \tilde{c}_0=(u^*\tilde{v})^tu^*s^*x^*yz^{-}. $$
Note that
\begin{itemize}
\item $sx$ and $u^*$ still differ in every position, 
\item $s^*$ and $y$ still differ in every position, and
\item $x^*y$ and $z$ still differ in every position.
\end{itemize}
It follows that $c_0\tilde{c}_0$ is an $S$-coloration by the following diagram.
$$  \begin{array}{rcccccccccccccl}
    u & v & u & v & \cdots & u & v & u & v & u & sx &  y & z & \\
    & u^* & \tilde{v} & u^* & \cdots & & u^*& \tilde{v}& u^* & \tilde{v}& u^*& s^* & x^*y & z^{-}    \end{array}
   $$
$\tilde{c}_0$ can be rewritten as 
   $$ \tilde{c}_0=u^{-}v(\tilde{u}v)^{t-1}\tilde{u}sxy\tilde{z}. $$
   Note that $xy$ and $\tilde{z}$ still differ in every position. It follows that $\tilde{c}_0c_0$ is an $S$-coloration by the following diagram.
   $$  \begin{array}{rcccccccccccccl}
   u^{-} & v & \tilde{u} & v& \cdots & v & \tilde{u} & v &\tilde{u} & sx & y & \tilde{z} \\
   & u & v & u & \cdots &  & v & u & v & u & s &  xy & z & \\
    \end{array}
   $$
Now in all cases we have shown that $\chi(a_1, a_2)=3$ if $(a_1, a_2)\in \Sigma$ and $(a_1, a_2)\neq (1,2)$. The theorem is proved.
\end{proof}




\section{Final remarks}\label{sec:7}

We are not able to give a general formula for generating triples. But our methods in the previous sections yield the following partial results.

\begin{proposition}\label{prop:4} Let $S=\{\pm a_1,\dots, \pm a_n\}$ with $(a_1,\dots, a_n)\in \Sigma$. Then the following hold.
\begin{enumerate}
\item[(i)] If $\{a_1,\dots, a_n\}\subseteq 1+3\Z$ then $\chi(a_1,\dots, a_n)=3$.
\item[(ii)] If $\{a_1,\dots, a_n\}\subseteq 2+3\Z$ then $\chi(a_1,\dots, a_n)=3$.
\end{enumerate}
\end{proposition}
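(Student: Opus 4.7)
The plan is to derive the lower bound $\chi(a_1,\dots,a_n)\geq 3$ directly from Theorem~\ref{thm:lower} (the core subgraph $K_S$ contains the edge $\{0,a_1\}$, so $\lambda_S\geq 2$) and then to establish $\chi(a_1,\dots,a_n)\leq 3$ by invoking Lemma~\ref{lem:basic} (b) with $k=3$, producing explicit $S$-colorations $c_1$ and $c_2$ on $T_1(\ell,p,q)$ and $T_2(\ell,p,q)$ that agree on their endpoint-subpaths.

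The driving observation is that whenever all $a_i$ lie in the same residue class $r\in\{1,2\}$ modulo~$3$, the ``natural'' coloring $c(x)=x\bmod 3$ is a valid $S$-coloration of $\Z$, and moreover the piecewise coloring obtained by cutting at some point $N$ and replacing $c$ by the shift $x\mapsto(x-q)\bmod 3$ on the right half is also $S$-valid, provided $q$ avoids a specific residue mod~$3$: for a crossing edge $y-x=a_i$ with $x<N\leq y$, one computes $c(y)-c(x)\equiv a_i-q\pmod 3$, which is nonzero iff $q\not\equiv r\pmod 3$. In case (i) this forces $q\not\equiv 1\pmod 3$; in case (ii) it forces $q\not\equiv 2\pmod 3$.

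Concretely, I would choose $M$ with $\ell:=3M\geq a_n+1$, let $c_0=(012)^M$, pick a large prime $M_1>M$, and set $p=3M_1$. Then $c_1(x)=x\bmod 3$ on $[0,\ell+p)$ equals $(012)^{M+M_1}$, starts and ends with $c_0$, and is $S$-valid because no $a_i$ is divisible by $3$. Next I would pick $q>\ell$ with $q\equiv 2\pmod 3$ in case (i) or $q\equiv 1\pmod 3$ in case (ii), and with $q$ not divisible by $M_1$; by the choice of residue this gives $\gcd(p,q)=\gcd(3M_1,q)=1$. Finally I would define
\[
c_2(x)=\begin{cases} x\bmod 3, & 0\leq x<\ell,\\ (x-q)\bmod 3, & \ell\leq x<\ell+q, \end{cases}
\]
and verify from the displayed residue computation that $c_2$ is $S$-valid throughout (trivially on each half since $a_i\not\equiv 0\pmod 3$, and across the seam at position $\ell$ by the residue choice of $q$), agrees with $c_1$ on the first $\ell$ vertices (both equal $c_0$), and agrees on the last $\ell$ vertices (both blocks $q,\dots,q+\ell-1$ are colored by $(x-q)\bmod 3=(012)^M$).

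The substance of the proof is the one-line residue calculation at the seam; the rest is bookkeeping. The only mild obstacle is the simultaneous residue-plus-coprimality condition on $q$ and $p=3M_1$, but because $M_1$ is a large prime these conditions are essentially independent, and infinitely many suitable $q$ exist. Case~(ii) is entirely parallel to case~(i), differing only in the prescribed residue for $q\bmod 3$, so both can be handled in a single argument.
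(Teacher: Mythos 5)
Your proposal is correct and is essentially the paper's argument in expanded form: the paper also takes $c_1$ to be a periodic repetition of $012$ and obtains $c_2$ by deleting one symbol after the initial block $c_0$, which is exactly your piecewise coloring with $q=p-1\equiv 2\pmod 3$, and its ``slight modification'' for (ii) corresponds to your choice $q\equiv 1\pmod 3$. Your seam computation $c_2(y)-c_2(x)\equiv a_i-q\pmod 3$ is the same one-line observation that makes the paper's construction work.
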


\begin{proof} For (i), let $c_0$ be a sufficiently long $S$-coloration with repetitions of $012$. Let $c_1$ be a sufficiently long $S$-coloration with repetitions of $012$, and thus $c_1$ starts and ends with $c_0$. Let $c_2$ be obtaind from $c_1$ by removing the first $0$ after the occurrence of $c_0$. Then they witness $\chi(S)=3$. (ii) can be proved with a slight modification.
\end{proof}



Also, with a proof similar to that of Theorem~\ref{thm:pair} but significantly more tedious, we can establish the following result, whose proof we omit here.

\begin{theorem}\label{thm:triple} Let $S=\{\pm a_1, \pm a_2, \pm a_3\}$ with $(a_1, a_2, a_3)\in \Sigma$. If $\kappa_S\in\{3, 4\}$ then $\chi(a_1, a_2, a_3)=\kappa_S+1$.
\end{theorem}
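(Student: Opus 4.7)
The lower bound $\chi(a_1,a_2,a_3)\geq \kappa_S+1$ will be immediate from Theorem~\ref{thm:lower}, so my plan is to establish the matching upper bound in two main cases according to the value of $\kappa_S$.

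For $\kappa_S=4$: since $K_S$ is a graph on four vertices with chromatic number $4$, it must be $K_4$, so all six pairwise positive differences in $\{0,a_1,a_2,a_3\}$ lie in $\{a_1,a_2,a_3\}$. A short arithmetic check pins these differences down to $(a_1,a_2,a_3)=(1,2,3)$ (one quickly forces $a_2=2a_1$, then $a_3=3a_1$, then $\gcd=1$), for which Corollary~\ref{cor:1ton} already gives $\chi=5=\kappa_S+1$.

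The substantive work is the case $\kappa_S=3$, where I would show $\chi\leq 4$ by invoking Lemma~\ref{lem:basic}(b): for suitable $\ell\geq a_3+1$ and coprime $p,q>\ell$ (typically of the form $p=N\ell$ and $q=p-1$ for large $N$), I need to produce $S$-colorations $c_1$ of $T_1(\ell,p,q)$ and $c_2$ of $T_2(\ell,p,q)$ using $4$ colors that agree on the subgraphs $T_i^{\pm}$. The first reduction is to note that whenever $S\cap 3\Z=\varnothing$, Theorem~\ref{thm:cong} with $m=2$ already yields $\chi\leq 4$, so I only need to treat configurations in which some $a_i$ is divisible by $3$. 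Since $\kappa_S=3$ forces a triangle in the $4$-vertex core, these configurations split further by which of $a_2=2a_1$, $a_3=2a_1$, $a_3=a_1+a_2$, or $\{a_2-a_1,a_3-a_1,a_3-a_2\}\subseteq S$ holds, and by the residues of $a_2,a_3$ modulo $a_1$ (odd vs.\ even multiple, remainder compared with $a_1/2$), very much as in the three cases that organize the proof of Theorem~\ref{thm:pair}.

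In each subcase, the strategy I would borrow directly from Theorem~\ref{thm:pair} is to build a short template $c_0$ of length $\ell$ as a concatenation of monochromatic blocks of the form $0^\ast 1^\ast 2^\ast$, inserting one occurrence of the fourth color $3$ at whichever position a triangle constraint (together with some multiple of $3$ lying in $S$) forces. Setting $c_1=c_0^N$ then gives a valid $S$-coloration of length $N\ell$, and a one-letter deletion of a carefully chosen entry near the middle yields $c_2$ of length $N\ell-1$; validity is checked by the same kind of column-alignment diagrams as in the proof of Theorem~\ref{thm:pair}, with the fourth color serving as an escape valve at the singular position. The principal obstacle—and presumably the reason the authors omit the details—is not conceptual but combinatorial: the case analysis is long, and for each template one has to verify, via the alignment diagrams, that no two entries at vertical distances $a_1,a_2,a_3$ carry the same color, both in the periodic part and in the transition window around the deletion; nothing new beyond the techniques developed in Sections~\ref{sec:5} and~\ref{sec:6} should be needed.
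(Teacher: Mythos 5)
The paper states this theorem but explicitly omits its proof (``with a proof similar to that of Theorem~\ref{thm:pair} but significantly more tedious\dots whose proof we omit here''), so there is no written argument to compare yours against; your proposal has to stand on its own. Your lower bound via Theorem~\ref{thm:lower}, your complete treatment of $\kappa_S=4$ (a $4$-vertex core with chromatic number $4$ must be $K_4$, which forces $(a_1,a_2,a_3)=(1,2,3)$, handled by Corollary~\ref{cor:1ton}), and your reduction of the $\kappa_S=3$ case to generating sets with $S\cap 3\Z\neq\varnothing$ via Theorem~\ref{thm:cong} are all correct, and the overall plan is exactly the one the paper alludes to.

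The problem is that the proposal stops where the actual proof begins. For $\kappa_S=3$ with some $a_i$ divisible by $3$ you exhibit no template $c_0$, no modified block, and no verification; you only assert that words built from blocks $0^\ast 1^\ast 2^\ast$ with a single insertion of the color $3$ ``should'' work. In the two-generator case (Theorem~\ref{thm:pair}) this step already occupies three separate cases, and the passage from $c_1$ to $c_2$ is not a clean one-letter deletion: the deletion shifts everything downstream, so the entire block must be re-engineered ($v\mapsto\tilde v$, $u\mapsto u^\ast$, $w\mapsto w^-$, etc.) and each replacement re-checked against every generator and every alignment. With a third generator there are three vertical distances to control simultaneously, which is precisely why the paper calls the argument ``significantly more tedious''; none of that work is done or even set up concretely here. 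There is also a concrete error in your case organization: the triangles of $K_S$ include the type $a_3=2a_2$ (triangle $\{0,a_2,a_3\}$ with $a_3-a_2=a_2$), which your list omits. For example $(a_1,a_2,a_3)=(1,3,6)$ has $\kappa_S=3$ and $3\mid a_2$, so it lies in exactly the bucket your case split would fail to reach. As it stands the proposal is a plausible roadmap, not a proof.
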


We remark that Theorem~\ref{thm:triple} fails when $\kappa_S=2$. In fact, by running the algorithm {\tt BPC} one can verify that $\chi(1, 5,8)>3$. Since for $S=\{\pm 1, \pm 5, \pm 8\}$, $S\cap 3\Z=\varnothing$, it follows from Theorem~\ref{thm:cong} that $\chi(1, 5, 8)\leq 4$. Thus $\chi(1, 5,8)=4$. However, $\kappa_S=2$.




\section*{Acknowledgments}

We thank Jan Greb\'{\i}k, Steve Jackson, Shujin Qu and Xu Wang for helpful discussions on the topic of this paper. The authors acknowledge the partial support of their research by the National Natural Science Foundation of China (NSFC) grants 12271263 and 12250710128.

\end{document}